\pgfplotsset{compat=1.13}
\newcommand{\new}[1]{{{#1}}}
\numberwithin{figure}{section}
\numberwithin{equation}{section}
\renewenvironment{itemize}
{\begin{list}{$\bullet$}{\labelwidth0mm \leftmargin3mm %
  \itemsep0pt plus 0pt \topsep3pt \parsep1pt plus 4pt \labelsep2mm}}
{\end{list}}
\newtheorem{theorem}{Theorem}[section]
\newtheorem{corollary}[theorem]{Corollary}
\newtheorem{lemma}[theorem]{Lemma}
\theoremstyle{definition}
\newtheorem{definition}[theorem]{Definition}
\theoremstyle{remark}
\newtheorem{remark}[theorem]{Remark}
\newtheorem{example}[theorem]{Example}
\numberwithin{figure}{section}
\numberwithin{equation}{section}
\newcommand*\rel@kern[1]{\kern#1\dimexpr\macc@kerna}
\newcommand*\widebar[1]{%
  \begingroup
  \def\mathaccent##1##2{%
    \rel@kern{0.8}%
    \overline{\rel@kern{-0.8}\macc@nucleus\rel@kern{0.2}}%
    \rel@kern{-0.2}%
  }%
  \macc@depth\@ne
  \let\math@bgroup\@empty \let\math@egroup\macc@set@skewchar
  \mathsurround\z@ \frozen@everymath{\mathgroup\macc@group\relax}%
  \macc@set@skewchar\relax
  \let\mathaccentV\macc@nested@a
  \macc@nested@a\relax111{#1}%
  \endgroup
}
\DeclareMathOperator{\conv}{conv}
\newcommand{\edge}{\ensuremath{E}}
\newcommand{\edges}{\ensuremath{\mathcal{E}}}
\newcommand{\elm}{\ensuremath{T}\xspace}
\let\Forall=\forall
\renewcommand{\forall}{\Forall\,}
\newcommand{\face}{\mathit{F}}
\newcommand{\faces}{\ensuremath{\mathcal{F}}}
\newcommand{\freefaces}{\mathbb{F}}
\newcommand{\grid}{\mathcal{T}}
\newcommand{\marked}{\mathcal{M}}
\newcommand{\N}{\ensuremath{\mathbb{N}}}
\newcommand{\normM}[2][]{%
  #1\vert\kern-0.9pt#1\vert\kern-0.9pt#1\vert #2
  #1\vert\kern-0.9pt#1\vert\kern-0.9pt#1\vert}
\newcommand{\NVB}{\textsf{NVB}\xspace}
\let\Paragraph=\paragraph
\renewcommand{\paragraph}[1]{\Paragraph{\textbf{#1.}}}
\newcommand{\R}{\ensuremath{\mathbb{R}}}
\newcommand{\RT}{\mathcal{RT}}
\newcommand{\vertices}{\ensuremath{\mathcal{V}}}
\begin{document}

\title[A Weak Compatibility Condition For Newest Vertex Bisection]
{\boldmath A Weak Compatibility Condition For Newest Vertex Bisection in any dimension}

\author[M. Alk\"amper]{Martin Alk\"amper$^a$}
\author[F. Gaspoz]{Fernando Gaspoz$^a$}
\author[R. Kl\"ofkorn]{Robert Kl\"ofkorn$^b$}

\urladdr{\url{www.ians.uni-stuttgart.de/nmh/}}
\email{alkaemper@ians.uni-stuttgart.de}
\email{fgaspoz@ians.uni-stuttgart.de}
\email{Robert.Kloefkorn@iris.no}

\address{$^A$ Institut f\"ur Angewandte Analysis und
  Numerische Simulation, Fachbereich Mathematik,
  Universit\"at Stuttgart,
  Pfaffenwaldring 57, D-70569 Stutt\-gart, Germany }%

\address{$^B$ International Research Institute of Stavanger,
  Thormoehlensgt. 55, 5006 Bergen, Norway}%

\keywords{ Adaptive method, mesh generation, mesh refinement, newest vertex bisection}

\date{\today}

\subjclass[2010]{Primary 65N30, 65N50, 65N12}

\begin{abstract}
We define a weak compatibility condition for the Newest Vertex Bisection algorithm on simplex grids of any dimension and show that using this condition the iterative algorithm terminates successfully. Additionally we provide an $O(n)$ algorithm that renumbers any simplex grid to fulfil this condition. Furthermore we conduct experiments to estimate the distance to the standard compatibility and also the geometric quality of the produced meshes.
\end{abstract}

\maketitle

\section{Introduction\label{sec:intro}}

Dynamically adaptive conforming unstructured meshes based on simplices usually use either Newest Vertex Bisection (\NVB) \cite{CaKrNoSi:08,GaScSt:14,GaHeSi:15,Mitchell:16,AlkamperKlofkorn:17} or Longest Edge Bisection (LEB) \cite{Rivara:97, HaKoKr:10,ApCaHe:15,KoKrKr:08,Horst:97}. While \NVB is a mere topological construct, \new{where refinement} simply depends on an ordering of the vertices, LEB uses geometric information to always refine the longest edge. \new{While it is clear for \NVB, that it only produces a finite number of shape classes, for LEB this is non-trivial and requires some combinatorial effort \cite{ApCaHe:15}.} 
This is a strong argument for using \NVB and an overview on \NVB can be found in \cite{Mitchell:16}.

However, one big drawback of \NVB is its non-applicability to generic unstructured meshes in 3 (or more) dimensions, as it needs a compatibility condition between neighboring elements on the initial grid. Standard simplicial mesh generators, such as \gmsh \cite{gmsh:09} or \tetgen \cite{tetgen:15}, do not guarantee this condition. This has been an open problem for a long time 
and with this work, to the best of our knowledge, for the first time a solution to this problem is presented that is applicable in any dimension and relies only  on standard NVB elements (in contrast to \cite{ArMuPo:00} ). Also it does not multiply the number of elements by $\frac{1}{2}(d+1)!$, while halving each angle $d$ times, as in \cite{Kossaczky:94,Stevenson:08}, and it is surprisingly simple.

The standard (strong) compatibility condition has been developed by Traxler and Maubach \cite{Maubach:95,Traxler:97} and generalized by Stevenson \cite{Stevenson:08}. It ensures that every level of uniform refinement is conforming and that there is a bound on the effort of the conforming closure. There even is a bound on the effort for parallel computations \cite{AlkamperKlofkorn:17}.

We introduce a weaker compatibility condition that is applicable to generic unstructured simplicial grids. We gain \new{applicability} at the cost of losing some of the convenient properties obtained by the stronger compatibility condition. It generalizes the concept of a mesh being conformingly marked introduced by Arnold et al. in \cite{ArMuPo:00} and directly relates to what Stevenson calls compatibly divisible \cite{Stevenson:08}. We will show that \NVB terminates successfully using this condition.

Furthermore, we present an algorithm that is capable of \new{relabeling} any grid to be weakly compatible and has in principle an 
effort of $O(n)$, where $n$ is the number of elements in the grid. In our
implementation, however, the algorithm shows a complexity of $O(n \log n)$ which is related to a neighbor search 
that is carried out prior to the relabeling algorithm. In addition, the
algorithm is capable to recover a strongly compatible situation for some meshes.

\new{To estimate the effort of the conforming closure, we will investigate metrics that relate to distances to a strongly compatible situation, as a strongly compatible situation minimizes the effort by design. The first metric measures refinement propagation on the macro level and the second metric exploits} the locality of the definition of strong compatibility and counts the amount of faces where the corresponding neighbors do not fulfil the strong compatibility condition.

\section{Weak Compatibility Condition}\label{sec:weakcompcond}

\subsection{Introduction}
We shortly introduce \NVB following the notation of Stevenson \cite{Stevenson:08}, while the algorithm itself was originally introduced by Maubach and Traxler \cite{Maubach:95,Traxler:97}.

\begin{definition}[Newest Vertex Bisection]
We identify a $d$-dimensional simplex $\elm$ (the convex hull $\conv\{z_i : 0 \leq i \leq d\}$) with an ordering of the vertices $z_i$ and its type $0\leq t_\elm\leq d-1$, i.e., 
\[
\elm = [z_0,z_1,\ldots,z_d]_{t_\elm}.
\]
Then the Newest Vertex Bisection (\NVB) of the simplex $T$ is defined by its two children 
\[
\elm_0 = \left[z_0,\frac{z_0 + z_d}{2},z_1,\ldots,z_{d-1}\right]_{t_\elm + 1 \mod d}
\]
and
\[
 \elm_1 = \left[z_d,\frac{z_0 + z_d}{2},z_1,\ldots,z_{t_\elm},z_{d-1},\ldots,z_{t_\elm +1}\right]_{t_\elm + 1 \mod d}.
\]
We call the edge $\edge_\elm = \overline{z_0z_d}$ the \emph{refinement edge} of the simplex $\elm$.
\end{definition}

\begin{example}[\NVB in 3d]
\label{ex:facesoftetra}
In 3 dimensions the simplex $\elm = [z_0,z_1,z_2,z_3]_{t_\elm}
$
is of type $t_\elm \in \{0,1,2\}$.

Bisecting the refinement edge $\edge_\elm = \overline{z_0z_3}$ at its center $z_{03}=\frac{z_0+z_3}{2}$ leads to the two simplices
\begin{equation}
T_0 = [z_0,z_{03},z_1,z_2]_{t_\elm +1\mod 3}  \text{ and }   \begin{cases} T_1=[z_3,z_{03},z_2,z_1]_1 &  \text{if } t_\elm=0, \\
T_1=[z_3,z_{03},z_1,z_2]_2 &  \text{if } t_\elm=1, \\
T_1=[z_3,z_{03},z_1,z_2]_0 &  \text{if } t_\elm=2. \end{cases}
\end{equation}
The refinement edges of the children are $\edge_{\elm_0} = \overline{z_0z_2}$ and $\edge_{\elm_1} = \overline{z_3z_1}$, if $t_\elm=0$ and $\edge_{\elm_1} = \overline{z_3z_2}$, if $t_\elm \in \{1,2\}$.

From this we can directly deduce the initial refinement edge of all $4$ faces of the simplex, namely
\begin{align*}
\face_0 = \{z_0,z_1,z_2\}: \qquad &  \edge_{\face_0} = \overline{z_0z_2},\\
\face_1 = \{z_0,z_1,z_3\}: \qquad & \edge_{\face_1} = \overline{z_0z_3},\\
\face_2 = \{z_0,z_2,z_3\}:  \qquad& \edge_{\face_2} = \overline{z_0z_3},\\
\face_3 = \{z_1,z_2,z_3\}:  \qquad& \edge_{\face_3} = \begin{cases} \overline{z_1z_3} & \text{if } t_\elm = 0, \\ \overline{z_2z_3} & \text{else}. \end{cases}
\end{align*}
\end{example}

Notice that two 3d neighboring simplices will match if the initial refinement edge on their shared face coincides, otherwise the refinement algorithm will fail. 
Here follows the idea of Arnold et al. \cite{ArMuPo:00} to set for each face the initial refinement edge as the longest edge. From there they create the simplices containing said faces, \new{but then have to introduce non-standard initial elements and their respective refinement in order to  combat the problem that arises from the fact that the simplices are not necessarily \NVB simplices.}

\subsection{Some \NVB Properties}

\new{By creating a finite number of similarity classes  \NVB creates a sequence of non-degenerating simplices  (see~\cite{Traxler:97,Maubach:95}).
In order to use this to our advantage we define the concepts of  Refinement Trees and  \NVB -equivalence, this equivalence defines the same equivalence classes for simplices as in \cite{Stevenson:08}.}
\begin{definition}[Refinement Tree and \NVB -equivalence]
Let $\elm = [z_0,\ldots,z_d]_{t_\elm}$. We execute $d$ uniform \NVB -refinements, which we denote in a graph by setting the refinement edges to be the nodes of the graph and the two children to be the \new{refinement edges of the children}. The root is the initial refinement edge $\overline{z_0z_d}$. We call this graph the Refinement Tree $\RT(\elm)$ and call
a $d$-simplex $\elm$ \NVB -equivalent to a simplex $\elm'$, if $\RT(\elm) = \RT(\elm')$. 
\end{definition}

\begin{example}[Refinement Tree for Type 0]
Figure \ref{fig:edgetree} shows  the refinement tree of the simplex $\elm = [z_0,\ldots,z_d]_0$. One can easily see that this is also the refinement tree of $\elm' = [z_d,z_{d-1},\ldots,z_1,z_0]_0$, so $\elm$ is \NVB -equivalent to $\elm'$.
\begin{figure}[htbp]
\begin{center}
\begin{tikzpicture}[]
\node  (z){$\overline{z_0z_d}$}
  child {node  (a) {$\overline{z_0z_{d-1}}$}
    child {node  (b) {$\overline{z_0z_{d-2}}$}
      child {node {$\vdots$}
        child {node  (d) {$\overline{z_0z_1}$}}
        child {node  (e) {$\overline{z_1z_2}$}}
      } 
      child {node {$\vdots$}}
    }
    child {node (g) {$\overline{z_1z_{d-1}}$}
      child {node {$\vdots$}}
      child {node {$\vdots$}}
    }
  }
  child {node  (j) {$\overline{z_1z_d}$}
    child {node  (k) {$\overline{z_1z_{d-1}}$}
      child {node {$\vdots$}}
      child {node {$\vdots$}}
    }
  child {node  (l) {$\overline{z_2z_d}$}
    child {node {$\vdots$}}
    child {node (c){$\vdots$}
      child {node  (o) {$\overline{z_{d-2}z_{d-1}}$}}
      child {node  (p) {$\overline{z_{d-1}z_d}$}
          child [grow=right] {node (q) {Level $d-1$ } edge from parent[draw=none]
            child [grow=up] {node (r) {$\vdots$} edge from parent[draw=none]
              child [grow=up] {node (s) {Level 2  } edge from parent[draw=none]
                child [grow=up] {node (t) {Level 1 } edge from parent[draw=none]
                  child [grow=up] {node (u) {Level 0} edge from parent[draw=none]}
                }
              }
            }
          }
      }
    }
  }
};
\end{tikzpicture}
\caption{Refinement edges of a $d$-simplex of type 0 under $d$ uniform refinements.}
\label{fig:edgetree}
\end{center}
\end{figure}
\end{example}

Note that in principle one can define refinement trees \new{for arbitrary bisection-based refinement}. So to turn a refinement tree into a simplex ordering, one needs to prove that it is one of the $d+1$ (number of types) refinement trees that relate to an actual \NVB refinement.

As we will often deal with $d$ uniform bisection, we use $\grid_{j}$ with a subindex $j$ for $j$ uniform refinements of $\grid$.
We will use $\grid^{i}$ with a superindex $i$ for lower dimensional subentities as follows.

\begin{definition}[$d$-Skeleton of a triangulation]
Let $\grid$ be a triangulation of a domain $\Omega \subset \R^d$. For $0\leq i \leq d$ we denote by $\grid^i$ the set of all simplices of dimension $i$ that are contained in $\grid = \grid^d$ and call $\grid^i$ the $i$-skeleton of $\grid$. We also define $\grid^i(\elm)$ to be the $i$-skeleton of the triangulation that consists of the single element $\elm$.
\end{definition}

In particular $\grid^0=\vertices$ is the set of all vertices, $\grid^1=\edges$ the set of all edges and $\grid^{d-1}=\faces$ the set of all faces.
Note that $\grid^i_j$ is the $i$-skeleton of the $j$-times refined grid and not the $j$-times refined $i$-skeleton.

\begin{lemma}[Type $0$]
\label{lem:type0}
Let $\elm =[z_0,\ldots,z_d]_0$ be a $d$-simplex of type $t_\elm = 0$ that is bisected $d$ times uniformly.
\begin{itemize}
\item Then every edge $\edge \in \grid^1(\elm)$ gets bisected exactly once and no newly created edge gets bisected.
\item Any sub-simplex $\face \in \grid^{d-1}(\elm)$ is bisected by a valid \NVB uniformly $d-1$ times. It is of type $t_\face=0$ and its ordering coincides with the ordering of $\elm$ up to \NVB -equivalence.
\end{itemize}
\end{lemma}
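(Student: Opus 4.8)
The plan is to first pin down the combinatorial structure of the refinement tree $\RT(\elm)$ depicted in Figure~\ref{fig:edgetree}, and then read off both assertions from it. Concretely, I would prove by induction on the dimension $d$ the following strengthened statement: after $j$ uniform \NVB -refinements of $\elm=[z_0,\ldots,z_d]_0$, with $0\le j\le d$, the $2^j$ resulting simplices use only the original vertices $z_0,\ldots,z_d$ together with midpoints $z_{ab}:=(z_a+z_b)/2$ of original edges; their refinement edges are exactly the original edges $\overline{z_i\,z_{i+d-j}}$ for $0\le i\le j$ (each possibly occurring more than once as a node of the tree, but always as the same geometric edge, since both endpoints are original vertices); and the orderings of these $2^j$ simplices are the explicit ones refining the edge data of Figure~\ref{fig:edgetree}. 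The base case $d=1$ is immediate. In the inductive step I would split off the first bisection of $[z_0,\ldots,z_d]_0$ into the two type-$1$ children given in the \NVB definition and track how their refinement trees develop; the bookkeeping is to check that the type increments and the vertex reshuffling used to form $\elm_1$ reproduce exactly the claimed edges, types and orderings at every subsequent level.

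Granting this structure, the first bullet follows at once: an original edge $\overline{z_a z_b}$ with $a<b$ has index difference $b-a\in\{1,\ldots,d\}$, hence appears as a refinement edge precisely at level $d-(b-a)$ and nowhere else, so it is bisected exactly once; and since every refinement edge occurring in $\RT(\elm)$ is one of the original edges $\overline{z_i z_j}$, no edge created during the refinement is ever bisected.

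For the second bullet I would show that the $d$-fold refinement of $\elm$, restricted to a fixed face $\face=\{z_0,\ldots,z_d\}\setminus\{z_k\}$, is a uniform $(d-1)$-fold \NVB -refinement of $\face$ regarded as the type-$0$ simplex $[z_0,\ldots,\widehat{z_k},\ldots,z_d]_0$, up to \NVB -equivalence. An original edge contained in $\face$ has index difference at most $d-1$ as an edge of that $(d-1)$-simplex, and a short count based on the first bullet shows that each of the $2^{d-1}$ sub-simplices into which $\face$ is subdivided is obtained from $\face$ by exactly $d-1$ bisections; hence the refinement tree induced on $\face$ is the complete binary tree of depth $d-1$, that is, uniform, even though these bisections are generally not synchronised with the global refinement levels of $\elm$. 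To recognise this induced refinement as a genuine \NVB of $[z_0,\ldots,\widehat{z_k},\ldots,z_d]_0$, I would verify that the refinement edge chosen on every sub-simplex of $\face$ is the one prescribed by the type-$0$ rule for the inherited ordering, matching the children formulas of the \NVB definition applied in dimension $d-1$ term by term against the faces of the children of $\elm$; this is the higher-dimensional analogue of the computation in Example~\ref{ex:facesoftetra}.

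I expect the main obstacle to be the inductive description of $\RT(\elm)$ at the intermediate levels $j$: the type-$1$ children are not themselves type-$0$ simplices, so one cannot simply recurse on the same statement, and one must carry along enough data about the orderings --- in particular the permutation used to build $\elm_1$ --- to see the pattern of Figure~\ref{fig:edgetree} emerge and to get the edges at each level (those with index difference $d-j$) to come out correctly. A secondary difficulty is making precise, in the second bullet, that $\face$ is refined uniformly $d-1$ times even though its sub-simplices are bisected at different global refinement levels; this is exactly where the index-difference count from the first bullet does the work.
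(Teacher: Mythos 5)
Your proposal follows essentially the same route as the paper: both read the first claim off the index-difference pattern of the refinement tree (the edge $\overline{z_iz_j}$ appears as a refinement edge exactly at level $d-j+i$, and no midpoint ever occurs as an endpoint in the tree), and both obtain the second claim by restricting that tree to a face $\face_i$ and matching the induced children rule against the type-$0$ tree in dimension $d-1$. The only difference is that the paper simply takes the tree structure from Figure~\ref{fig:edgetree} as given, whereas you propose to establish it by an induction on $d$ through the type-$1$ children --- a detail the paper leaves implicit.
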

\begin{proof}
The refinement edges of $\elm$ are depicted in Figure \ref{fig:edgetree}.
The distance of the indices decreases by 1 each level, so any edge $\overline{z_iz_j}, i <j$ is bisected exactly once at level $d-j+i$. Also none of the vertices in this tree are midpoints of former bisections, so no newly created edges are bisected. This proves the first claim.

For the second claim we notice that for the $d$ uniform \NVB -refinements of the simplex $\elm$ the refinement edges of the induced refinement of the face 
\[\face_i = \conv\{z_0,\ldots,z_{i-1},z_{i+1},\ldots,z_d\} \in \grid^{d-1}(\elm)\]  are refinement edges of the Refinement Tree of $\elm$. Moreover the children of the edge $\overline{z_nz_m}$ in the induced refinement tree of $\face_i $ are $\overline{z_nz_{m-1}}$ (or $\overline{z_nz_{m-2}}$ if $m-1=i$) and $\overline{z_{n+1}z_m}$ (or $\overline{z_{n+2}z_{m}}$ if $n+1=i$). From this we can clearly see that  $\face_i$ is \NVB -equivalent to $[z_0,\ldots,z_{i-1},z_{i+1},\ldots,z_d]_0$.
\end{proof}

\begin{remark}
This implies by induction that for $0\leq i \leq d-1$ all elements of $\grid^i(\elm)$ are valid \NVB simplices of type $0$.
\end{remark}

We now proceed to present a variant of Lemma~\ref{lem:type0} for elements of type $t_\elm = 1$.

\begin{lemma}[Type $1$]
\label{lem:type1}
Let $\elm = [z_0,z^*,z_1,\ldots,z_{d-1}]_1$ be a $d$-simplex of type $t_\elm = 1$ that is bisected $d$ times uniformly.
\begin{itemize}
\item Then every edge $\edge \in \grid^1(\elm)$ gets bisected exactly once and no newly created edge gets bisected.
\item Any sub-simplex $\face \in \grid^{d-1}(\elm)$ is of type $t_\face=1$, if it contains $z^*$ and of type $0$ else. The ordering of $\face$ coincides with the ordering of $\elm$ up to \NVB equivalence.
\end{itemize}
\end{lemma}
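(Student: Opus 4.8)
The plan is to mimic the proof of Lemma~\ref{lem:type0}, but with a slightly more careful bookkeeping of the vertex that is "frozen" by the type. First I would work out the refinement tree $\RT(\elm)$ of a type $1$ simplex $\elm = [z_0,z^*,z_1,\ldots,z_{d-1}]_1$, in the same spirit as Figure~\ref{fig:edgetree}. By the definition of \NVB applied to a type $1$ simplex, the two children are $\elm_0 = [z_0, z_{0*}, z_1,\ldots,z_{d-1}]_2$ (where $z_{0*} = \tfrac{z_0+z^*}{2}$) and $\elm_1 = [z^*, z_{0*}, z_0, z_{d-1},\ldots,z_1]_2$, so the refinement edges of the children are $\overline{z_0 z_{d-1}}$ and $\overline{z^* z_{d-1}}$. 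Crucially, after this first bisection the special vertex $z^*$ behaves exactly like an ordinary vertex sitting at one end of the ordering, and from level $1$ onward the tree is (two copies of) essentially the type~$0$ pattern of Lemma~\ref{lem:type0}. So I would set up an induction on $d$, using Lemma~\ref{lem:type0} as the engine: once we bisect once, both children are type~$2 = 1 + 1 \bmod d$ simplices whose remaining $d-1$ uniform refinements are governed by (the appropriate cyclic shift of) the lower-dimensional analysis. Since the statement of Lemma~\ref{lem:type0} already unwinds the type~$0$ tree completely, the cleanest route is: either prove a combined statement for all types $0,\ldots,d-1$ by simultaneous induction on $d$, or verify directly that $\RT(\elm)$ for type~$1$ is the tree obtained from Figure~\ref{fig:edgetree} by relabeling the root edge as $\overline{z_0 z^*}$ and routing the "$z^*$-branch" so that $z^*$ always plays the role of the smallest-index endpoint.

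For the first bullet (every edge of $\grid^1(\elm)$ is bisected exactly once, no new edge is bisected): I would argue exactly as in Lemma~\ref{lem:type0}. Each level of the refinement tree the "index distance" of a refinement edge drops by one, with $z^*$ treated as a vertex of index lying between $z_0$ and $z_1$; hence the edge $\overline{z_i z_j}$ (resp. an edge involving $z^*$) is bisected at a determined level, and no midpoint vertex $z_{ij}$ ever reappears as an endpoint of a later refinement edge. Because $\binom{d+1}{2}$ edges are bisected once each over $d$ uniform refinements and $\elm$ has exactly $\binom{d+1}{2}$ edges, no edge is bisected twice and no created edge is bisected.

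For the second bullet (faces): I would distinguish the two kinds of facets. The facet $\face = \conv\{z_0,z_1,\ldots,z_{d-1}\}$ (the one \emph{not} containing $z^*$) is opposite $z^*$; tracing which refinement edges of $\RT(\elm)$ lie inside $\face$ shows its induced refinement is a valid \NVB refinement and, reading off the child rule on edges inside $\face$ exactly as in the proof of Lemma~\ref{lem:type0}, that $\face$ is \NVB-equivalent to $[z_0,z_1,\ldots,z_{d-1}]_0$ — a type~$0$ simplex. For a facet $\face$ that \emph{does} contain $z^*$, i.e. $\face = \conv\{z_0,z^*,z_1,\ldots,z_{d-1}\}\setminus\{z_k\}$ for some $k$, the induced refinement tree again consists of refinement edges of $\RT(\elm)$, with the child of $\overline{z_n z_m}$ inside $\face$ obtained by the same "decrease an index" rule but skipping the removed vertex $z_k$; this is precisely the child rule of a type~$1$ \NVB simplex on $d-1$ vertices with distinguished vertex $z^*$, so $\face$ is \NVB-equivalent to the corresponding type~$1$ ordering. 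The induction on $d$ closes because the restriction of $\RT(\elm)$ to a facet is the $\RT$ of a simplex of one lower dimension.

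The main obstacle I expect is purely the bookkeeping: giving $z^*$ a consistent "fractional index" so that the single monotone rule "each level an index decreases by one, skipping deleted vertices" covers edges with and without $z^*$ uniformly, and then checking that the child-edge rule induced on each type of facet matches exactly the \NVB child rule of the claimed lower-dimensional ordering (including the cyclic shift of the type). Once the tree $\RT(\elm)$ is written down explicitly and this index convention is fixed, both bullets fall out the same way they do in Lemma~\ref{lem:type0}, and the remark-style conclusion (all of $\grid^i(\elm)$ for $0\le i\le d-1$ are valid \NVB simplices) follows by induction.
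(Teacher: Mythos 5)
Your overall strategy (write down the refinement tree of the type~$1$ simplex and read off both bullets from it, as in Lemma~\ref{lem:type0}) is the same as the paper's, but your construction of that tree starts from a wrong premise that breaks the argument. You take the refinement edge of $\elm = [z_0,z^*,z_1,\ldots,z_{d-1}]_1$ to be $\overline{z_0z^*}$ (your midpoint is $z_{0*}=\tfrac{z_0+z^*}{2}$, and your children drop $z^*$ resp.\ $z_0$). By the paper's definition the refinement edge of $[y_0,\ldots,y_d]_t$ is always $\overline{y_0y_d}$, the first and last vertex of the ordering, independently of the type; here $y_1=z^*$ and $y_d=z_{d-1}$, so the root of $\RT(\elm)$ is $\overline{z_0z_{d-1}}$, and the children are $[z_0,m,z^*,z_1,\ldots,z_{d-2}]_2$ and $[z_{d-1},m,z^*,z_{d-2},\ldots,z_1]_2$ with $m=\tfrac{z_0+z_{d-1}}{2}$. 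The whole point of $z^*$ sitting in position $1$ with type $1$ is that it is \emph{protected}: the edges $\overline{z^*z_i}$ are bisected \emph{last}, all of them at level $d-1$, while levels $0,\ldots,d-2$ only bisect edges among $\{z_0,\ldots,z_{d-1}\}$ with the index gap decreasing by one per level (this is exactly Figure~\ref{fig:edgetree1}). Your picture, in which $z^*$ "behaves like an ordinary vertex at one end of the ordering" and the $\overline{z_0z^*}$ edge is cut first, is therefore not the tree \NVB produces, and the subsequent level/index bookkeeping and the facet analysis built on it do not go through as stated. (You can check the discrepancy already for $d=3$ against Example~\ref{ex:facesoftetra} with $t_\elm=1$.)

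A secondary issue: your justification of "no edge is bisected twice" by counting ($\binom{d+1}{2}$ edges versus $d$ rounds of refinement) does not work, because the tree has $2^d-1$ nodes, far more than $\binom{d+1}{2}$; many nodes on the same level carry the same edge label. The correct argument is the one the paper uses for type $0$ and which you also sketch: each original edge occurs in the tree at exactly one level (determined by its index gap, with the $z^*$-edges all at level $d-1$), and no node of the tree is an edge with a midpoint as an endpoint, so no newly created edge is ever a refinement edge. Once you fix the root and the child rule, your treatment of the two kinds of facets (the one opposite $z^*$ becoming type $0$ and \NVB-equivalent to $[z_0,\ldots,z_{d-1}]_0$, the ones containing $z^*$ remaining type $1$) matches the paper's and would be fine.
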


\begin{proof}

The proof is very similar to the proof of Lemma \ref{lem:type0}.

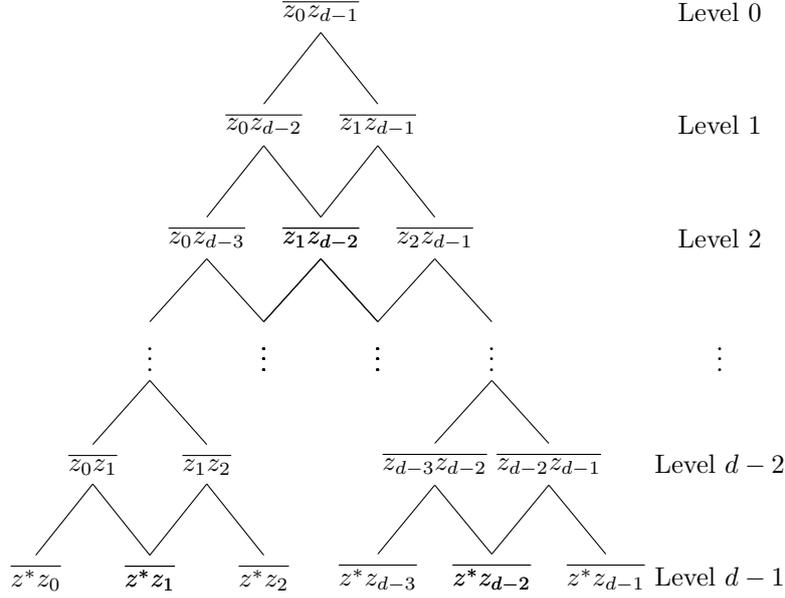
\begin{figure}[htbp]
\begin{center}
\begin{tikzpicture}[]
\node  (z){$\overline{z_0z_{d-1}}$}
  child {node  (a) {$\overline{z_0z_{d-2}}$}
    child {node  (b) {$\overline{z_0z_{d-3}}$}
      child {node {$\vdots$}
        child {node  (d) {$\overline{z_0z_1}$}
          child {node (q) {$\overline{z^*z_0}$}}
          child {node (q) {$\overline{z^*z_1}$}}
        }
        child {node  (e) {$\overline{z_1z_2}$}
          child {node (q) {$\overline{z^*z_1}$}}
          child {node (q) {$\overline{z^*z_2}$}}
        }
      } 
      child {node {$\vdots$}}
    }
    child {node (g) {$\overline{z_1z_{d-2}}$}
      child {node {$\vdots$}}
      child {node {$\vdots$}}
    }
  }
  child {node  (j) {$\overline{z_1z_{d-1}}$}
    child {node  (k) {$\overline{z_1z_{d-2}}$}
      child {node {$\vdots$}}
      child {node {$\vdots$}}
    }
  child {node  (l) {$\overline{z_2z_{d-1}}$}
    child {node {$\vdots$}}
    child {node (c){$\vdots$}
      child {node  (o) {$\overline{z_{d-3}z_{d-2}}$}
          child {node (q) {$\overline{z^*z_{d-3}}$}}
          child {node (q) {$\overline{z^*z_{d-2}}$}}
        }
      child {node  (p) {$\overline{z_{d-2}z_{d-1}}$}
          child {node (q) {$\overline{z^*z_{d-2}}$}}
          child {node (q) {$\overline{z^*z_{d-1}}$}}{
          child [grow=right] {node (q) {Level $d-1$ } edge from parent[draw=none]
            child [grow=up] {node (q) {Level $d-2$ } edge from parent[draw=none]
              child [grow=up] {node (r) {$\vdots$} edge from parent[draw=none]
                child [grow=up] {node (s) {Level 2  } edge from parent[draw=none]
                  child [grow=up] {node (t) {Level 1 } edge from parent[draw=none]
                    child [grow=up] {node (u) {Level 0} edge from parent[draw=none]}
                  }
                }
              }
            }
          }
        }
      }
    }
  }
};
\end{tikzpicture}
\caption{Refinement edges of a $d$-simplex of type 1 under $d$ uniform refinements.}
\label{fig:edgetree1}
\end{center}
\end{figure}
Figure \ref{fig:edgetree1} shows the refinement edges of $\elm$. The distance between the indices decreases by 1 each level until level $d-2$ and that the final level consists of all edges containing $z^*$. So any edge $\overline{z_iz_j}, i < j$ is bisected exactly once at level $d-1-j+i$ and any edge $\overline{z_iz^*}$ is bisected exactly once at level $d-1$. No edge containing a newly created point is bisected, so this proves the first claim.

For the second claim we notice that for the $d$ uniform \NVB -refinements of the simplex $\elm$ the refinement edges of the induced refinement of the face 
\[\face_i = \conv\{z_0,z^*,\ldots,z_{i-1},z_{i+1},\ldots,z_{d-1}\} \in \grid^{d-1}(\elm)\]  are refinement edges of the Refinement Tree of $\elm$. Moreover the children of the edge $\overline{z_nz_m}$ in the induced refinement tree of $\face_i $ are $\overline{z_nz_{m-1}}$ (or $\overline{z_nz_{m-2}}$ if $m-1=i$) and $\overline{z_{n+1}z_m}$ (or $\overline{z_{n+2}z_{m}}$ if $n+1=i$) for $m \neq n+1$. For the case $m=n+1$ the children are  are $\overline{z^*z_n}$ and $\overline{z^*z_m}$. Finally for the case $n=i-1,m=i+1$ the children are $\overline{z^*z_n}$ and $\overline{z^*z_m}$. From this we can clearly see that  $\face_i$ is \NVB -equivalent to $[z_0,z^*,\ldots,z_{i-1},z_{i+1},\ldots,z_d]_1$.

\new{The sub-simplex $\face' = \conv\{z_0,z_1,\ldots,z_{d-1}\}$ is \NVB -equivalent to $[z_0,z_1,\ldots,z_{d-1}]_0$ as every edge $\edge = \overline{z_iz_j}$ gets bisected at level $d-1-j+i$.} This proves the second claim.
\end{proof}

\begin{remark}
We can iterate Lemma \ref{lem:type1} in combination with Lemma \ref{lem:type0} to see that for a $d$ simplex $\elm$ with $t_\elm =1$ any lower dimensional sub-simplex $\face \in \grid^i(\elm), 2\leq i \leq d-1$ it holds $t_{\face} = 1$ if $z^* \in \face$ and $t_\face = 0$ else.
\end{remark}

For types $_\elm=0,1$ we have the convenient result that the number of refinements to bisect every edge is $d$. For types greater than $1$ this is unfortunately not true.

\begin{lemma}[Type $\geq 2$]
\label{lem:type2}
Let $\elm = [z_0,z^*_1,\ldots,z^*_t,z_1,\ldots,z_{d-t}]_t$ be a $d$-simplex of type $t_\elm = t \geq 2$ that is bisected $2d-t$ times uniformly and let $\vertices_i = \{z^*_1,\ldots,z^*_t\}$.
\begin{itemize}
\item Then every edge of $\elm$ gets bisected at least once. 
\item Any face $\face \in \grid^{d-1}(\elm)$ is of type $t_\face = \# (\elm' \cap \vertices_i)$. The ordering of $\face$ coincides with the ordering of $\elm$ up to \NVB equivalence.
\end{itemize} 
\end{lemma}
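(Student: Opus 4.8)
The plan is to imitate the proofs of Lemmas~\ref{lem:type0} and~\ref{lem:type1}, the new feature being that the first $d-t$ uniform refinements reduce $\elm$ to type-$0$ simplices, after which Lemma~\ref{lem:type0} governs the remaining $d$ refinements; this is exactly why the total number is $2d-t$. Indeed, every bisection raises the type by one modulo $d$, so after precisely $d-t$ refinements each of the $2^{d-t}$ descendants of $\elm$ has type $0$; call these simplices $\elm'$.

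For the first bullet I would begin by showing that during the initial $d-t$ refinements no newly created vertex is ever an endpoint of a refinement edge. From the child formulas, such a midpoint is inserted in position~$1$ and, as long as the type keeps increasing, each later bisection moves it exactly one position up (its index never exceeds the current type, so it lies in the range that is simply shifted by the child rule), reaching position~$d$ only $d-1$ bisections after its creation, hence only at a round $\geq d>d-t$. Consequently every edge bisected in the first $d-t$ refinements is an edge of $\elm$, and every edge of $\elm$ not yet bisected after $d-t$ refinements has not had its midpoint inserted and therefore survives as an edge of some descendant $\elm'$. Applying Lemma~\ref{lem:type0} to each $\elm'$, the next $d$ refinements bisect every edge of $\elm'$, in particular every still-unbisected edge of $\elm$. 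Thus every edge of $\elm$ is bisected at least once within $2d-t$ refinements; in contrast to the cases $t\leq1$, an edge may now be bisected more than once.

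For the second bullet, let $\face$ be a facet of $\elm$, obtained by omitting one vertex; then $\face$ contains $t$ starred vertices if the omitted vertex is not in $\vertices_i$ and $t-1$ of them otherwise. I would track the refinement of $\face$ induced by that of $\elm$ exactly as in the two previous lemmas: in each round $\face$ is either left untouched --- precisely when $\elm$'s current refinement edge has an endpoint outside $\face$, so that $\face$ passes unchanged into one child --- or is bisected along that edge, and the children of an edge $\overline{z_nz_m}$ in the induced refinement tree of $\face$ are read off from those in $\RT(\elm)$ by the same ``delete the omitted index'' rules as in Lemma~\ref{lem:type1} (and, once type $0$ has been reached, the rules in Lemma~\ref{lem:type0}), distinguishing the cases where the omitted vertex is $z_0$, a starred vertex, or a tail vertex. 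Matching the tree thus obtained with $\RT$ of the simplex whose vertex ordering is that of $\elm$ with the omitted vertex deleted and whose type is the stated count $\#(\face\cap\vertices_i)$ --- here one must check, as emphasised after the definition of the Refinement Tree, that this ordering with this type is a genuine \NVB simplex --- shows that $\face$ has type $t_\face$ and is \NVB -equivalent to that simplex.

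The step I expect to be the main obstacle is this facet bookkeeping: unlike for types $0$ and $1$ the induced refinement tree of $\face$ is not a single static picture but must be assembled from a first part (bisections of types $t,t+1,\ldots$) and a second part governed by Lemma~\ref{lem:type0}, and one must verify that the child rules are preserved across the wrap from type $d-1$ to type $0$ and in each of the three cases for the omitted vertex, in particular that the restricted ordering with the predicted type is always admissible (and \NVB -equivalent to what the two pieces produce). A secondary point is the corner $t=d-1$: then $\#(\face\cap\vertices_i)$ can equal $d-1$, which is not an admissible type for a $(d-1)$-simplex; in that case the facet already lies in a single type-$0$ child after the first refinement, hence has type $0$, so the formula for $t_\face$ is to be read with this reduction (equivalently, one may assume $t\leq d-2$). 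An alternative organisation, avoiding the explicit refinement tree, is a downward induction on $t$: the base case $t=d-1$ reduces to Lemma~\ref{lem:type0} after one refinement, and for $t<d-1$ one applies the statement at type $t+1$ to the two children $\elm_0,\elm_1$ and glues, using that deleting the single refinement edge $\overline{z_0z_{d-t}}$ (respectively splitting $\face$ along it) turns every edge into an edge of $\elm_0$ or $\elm_1$ (respectively every facet into one or two facets of $\elm_0,\elm_1$).
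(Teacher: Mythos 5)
Your first bullet is correct and is essentially the paper's argument: after $d-t$ uniform bisections every descendant has type $0$ and still contains all the starred vertices, no newly created midpoint has yet become an endpoint of a refinement edge, so every edge of $\elm$ either was already bisected or survives as an edge of a type-$0$ descendant, where Lemma~\ref{lem:type0} bisects it within $d$ further rounds. The paper states this more tersely by writing the type-$0$ descendants in the normal form $[\bar z_0,\ldots,\bar z_{d-t},z_1^*,\ldots,z_t^*]_0$ and reading off that $\overline{z_1^*z_2^*}$ is the last edge to be bisected, at level $2d-t-1$; your position-tracking of the midpoints is a correct way to justify that normal form.

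For the second bullet there is a genuine gap: the step you defer as ``the main obstacle'' --- verifying that the induced refinement tree of $\face$, assembled across the wrap from type $d-1$ to type $0$ and in the three cases for the omitted vertex, really is an admissible \NVB tree of the claimed type --- is the entire content of the claim, and neither your primary plan nor the sketched downward induction on $t$ carries this out. The paper avoids the bookkeeping altogether with a device you do not use: it constructs an artificial type-$0$ ancestor $\tilde{\elm}=[\tilde z_0,\ldots,\tilde z_d]_0$ (with $\tilde z_0=z_0$, $\tilde z_j=z_j$ for $1\leq j\leq d-t$, and $\tilde z_{d-t+i}=2z_i^*-z_0$) of which $\elm$ is the $t$-fold first child, so that the $t$ fictitious bisections introduce exactly the vertices of $\vertices_i$ as midpoints. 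Lemma~\ref{lem:type0} applied to $\tilde{\elm}$ then supplies the ordering of all sub-simplices at once, and the type of a facet $\face$ of $\elm$ is read off as the number of fictitious bisections that touched the corresponding facet of $\tilde{\elm}$, i.e.\ $\#(\face\cap\vertices_i)$, with no case analysis and no separate admissibility check. I recommend adopting this reduction rather than reassembling the tree by hand. Your remark on the corner case $t=d-1$ is well taken: a facet may then contain $d-1$ starred vertices while a $(d-1)$-simplex only admits types $0,\ldots,d-2$, so the stated formula must be read modulo $d-1$; the ancestor argument produces this reduction automatically, since the facet of $\tilde{\elm}$ starts at type $0$ and its type advances modulo $d-1$ with each bisection it undergoes.
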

\begin{proof}
Bisecting the element $\elm = [z_0,z^*_1,\ldots,z^*_t,z_1,\ldots,z_{d-t}]_t$ uniformly $d-t$ times leads to a set of descendants of type 0 that look as follows
\[
\elm' = [\bar{z}_0,\bar{z}_1,\ldots,\bar{z}_{d-t},z^*_1,\ldots,z^*_t]_0
\]
where $\bar{z}_k$ are either vertices of $\elm$ or centers of refinement edges of $\elm$. This means that none of the refinement edges $\edge_{ij}=\overline{z_i^*z_j^*}$, $1<i<j<t$ has been refined yet. From the proof of Lemma \ref{lem:type0} we know that $\edge_{ij}$ will be refined at level $d-j+i$ of $\elm'$ and hence at level $d-t+d-j+i = 2d-t-j+i$ of $\elm$. In particular for $\edge_{12}$ this means $2d-t-1$, so after $2d-t$ uniform bisections every edge in $\elm$ has been refined at least once.

For the second claim we construct an artificial ancestor $\tilde{\elm} = [z_0,\ldots,z_d]_0$ by assuming that the simplex $\elm$ has been created by $t$ bisections always being the first child. From Lemma \ref{lem:type0} we know that the ordering of all subsimplices of $\tilde{\elm}$ coincides with the ordering of $\tilde{\elm}$ and this translates to the descendant. Furthermore $\tilde{\elm}$ has been refined $t$ times introducing the vertices $\vertices_i$, which means that any subsimplex $\tilde{\elm'}$ has been refined as many times as it contains vertices from $\vertices_i$, which proves the second claim.

In particular for $\elm=[z_0,z^*_1,\ldots,z^*_t,z_1,\ldots,z_{d-t}]_t$ any subsimplex $\elm' \in \grid^{d-1}(\elm)$ is of type $t_{\elm'} = \# \{z^*_i | z^*_i \in \elm'\}$ as the subsimplex of the artificial ancestor has been refined as many times.
\end{proof}

\begin{remark}
Lemma \ref{lem:type2} generalizes Remark $3.4$ of~\cite{GaScSt:14}.
\end{remark}

The Lemmata \ref{lem:type0}, \ref{lem:type1} and \ref{lem:type2} lead to the following corollary by simple induction.

\begin{corollary}[Trace Meshes]
For $0<i<d$ the meshes $\grid^i_d$ can be seen as standard \NVB meshes on their own.
\end{corollary}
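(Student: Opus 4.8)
The plan is to set up an induction on the number of uniform refinement steps applied to the macro mesh, using Lemmata~\ref{lem:type0}, \ref{lem:type1} and~\ref{lem:type2} as the single-element building blocks and a gluing argument for the conformity across faces. The statement to prove is that for $0<i<d$ the trace mesh $\grid^i_d$ — the $i$-skeleton obtained after $d$ uniform bisections of $\grid$ — is itself a valid \NVB mesh of dimension $i$, meaning that each of its simplices carries a consistent ordering and type, that the ordering is generated by genuine \NVB refinements, and that it forms a conforming triangulation of the union of the original $i$-skeleton $\grid^i$.

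First I would fix an element $\elm\in\grid$ and an $i$-dimensional subsimplex $\face\in\grid^i(\elm)$ of the macro mesh. The three Lemmata (and the Remarks following Lemma~\ref{lem:type0} and Lemma~\ref{lem:type1}, plus the second claim of Lemma~\ref{lem:type2}) together tell us that after enough uniform refinements of $\elm$, the induced refinement on $\face$ is exactly the \NVB-refinement of $\face$ viewed as an $i$-simplex with its inherited ordering and inherited type $t_\face=\#(\face\cap\vertices_i)$. In particular, $d$ uniform refinements of $\elm$ induce at least $i$ uniform refinements on $\face$ for types $0$ and $1$, and for higher types one needs the $2d-t$ bound of Lemma~\ref{lem:type2}; since we may always refine more than needed (extra refinements of a valid \NVB mesh stay valid \NVB meshes), $d$ uniform refinements of $\grid$ suffice to see $\grid^i_d$ restricted to $\elm$ as at least $i$ uniform refinements of the \NVB mesh $\grid^i(\elm)$. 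This is the ``simple induction'' referred to in the statement: one inducts on $i$ downward from $d-1$, using that the $(d-1)$-skeleton case is handled by the three Lemmata directly, and that the $i$-skeleton is the $i$-skeleton of the $(d-1)$-skeleton, so the inductive hypothesis applies inside each face.

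The second and more delicate ingredient is conformity: I must check that the induced orderings on a shared lower-dimensional subsimplex $\face=\elm_1\cap\elm_2$ of two neighboring macro elements agree up to \NVB-equivalence, so that the refined trace meshes match along $\face$ and $\grid^i_d$ is globally a conforming \NVB mesh. Here the key point is that the induced ordering on $\face$ computed from $\elm_1$ and the one computed from $\elm_2$ both equal, up to \NVB-equivalence, the intrinsic ordering of $\face$ as an $i$-simplex — this is precisely the ``ordering coincides with the ordering of $\elm$ up to \NVB-equivalence'' clause in each Lemma, applied in each of the two elements. Since \NVB-equivalent simplices have identical refinement trees $\RT(\cdot)$ by definition, the refinements induced on $\face$ from the two sides are literally the same triangulation, so the global trace mesh is conforming.

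I expect the main obstacle to be bookkeeping the type of $\face$ consistently across the induction and across neighbors, rather than any deep new idea. Concretely: in Lemma~\ref{lem:type2} the type of a subsimplex is $\#(\face\cap\vertices_i)$, and one must verify that this ``counting'' rule is compatible under iteration (a subsimplex of a subsimplex) and gives the same answer whether one descends through $\elm_1$ or $\elm_2$ — but $\vertices_i$ is an intrinsic notion once the ordering is fixed (it is the set of ``starred'' vertices determined by the refinement history), so agreement of orderings up to \NVB-equivalence forces agreement of types. A secondary technical point is making sure $d$ refinements really is enough uniformly across all faces and all macro types simultaneously; since $2d-t\le 2d-2$ for $t\ge 2$ while we only claimed $\grid^i_d$, one should either note that the corollary's $\grid^i_d$ should be read with ``sufficiently many'' refinements, or simply observe that any further uniform refinement of an already-valid trace \NVB mesh remains a valid \NVB mesh, so stating it for $\grid^i_d$ with $d$ large enough (indeed $d$ suffices for the orderings/types to be correct, only the claim ``every edge bisected'' needs more) is harmless. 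Assembling these observations, the corollary follows.
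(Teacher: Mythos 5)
The paper gives no written proof beyond the remark that the corollary follows from Lemmata \ref{lem:type0}, \ref{lem:type1} and \ref{lem:type2} ``by simple induction,'' and your downward induction on the skeleton dimension --- the $i$-skeleton is the $i$-skeleton of the $(d-1)$-skeleton, each step controlled by the second claims of the three Lemmata --- is exactly that argument. Your observation that the corollary only concerns the validity of the inherited orderings and types (so the $2d-t$ refinement count of Lemma \ref{lem:type2} is not needed here) also matches the paper's own gloss of the statement, which is element-local: ``all elements of $\grid^i(\elm)$ are valid \NVB simplices.''

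The one step that overreaches is your global conformity argument. You assert that the orderings induced on a shared subsimplex $\face = \elm_1 \cap \elm_2$ from the two sides ``both equal, up to \NVB-equivalence, the intrinsic ordering of $\face$ as an $i$-simplex.'' There is no such intrinsic ordering: each element induces an ordering and type on $\face$ through its own vertex ordering, and for a generic unstructured mesh these need not agree --- that failure is precisely what the weakest compatibility condition (Definition \ref{def:weakestCompCond}, introduced only \emph{after} this corollary) is designed to exclude. So the gluing step is valid only under that additional hypothesis and fails for an arbitrary triangulation. Since the corollary, as the paper reads it, is an element-local statement, this does not invalidate your proof of the claim actually being made, but you should either drop the conformity paragraph or state explicitly that it presupposes the weakest compatibility condition.
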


This means for any simplex $\elm$ with type $ 0 \leq t_\elm \leq d-1$ for $0\leq i \leq d-1$ all elements of $\grid^i(\elm)$ are valid \NVB simplices.

\subsection{Compatibility Condition}

As refinement is defined element-wise with \NVB, we have to define a compatibility condition that couples neighboring elements. The weakest possible condition is the following.
\begin{definition}[Weakest Compatibility Condition]
\label{def:weakestCompCond}
A triangulation $\grid$ fulfils the weakest compatibility condition, iff for any pair of neighbors $\elm_i, \elm_j \in \grid^d$ and their shared face $\face \in \grid^{d-1}$ with $\face = \elm_i \cap \elm_j$ the ordering and type of $\face$ within both elements coincide up to \NVB equivalence.
\end{definition}

This basically means that the induced refinement of a face is the same from both sides. This is called compatibly divisible in \cite{Stevenson:08} or conformingly marked in \cite{ArMuPo:00} (for $d=3$). This condition is necessary, but may not be sufficient for the iterative \NVB algorithm (\cite{Stevenson:08}) to terminate. So we define a slightly stronger condition. 

\begin{definition}[Weak Compatibility Condition]
\label{def:weakCompCond}
A triangulation $\grid$ is called weakly compatible, iff it fulfils condition \ref{def:weakestCompCond} and there exists a finer Triangulation that is conforming and only contains elements of type $t \in \{0,1\}$.
\end{definition}

\begin{remark}
The recursive algorithm \cite{SchmidtSiebert:05} does in general not terminate under these conditions, as neither condition \ref{def:weakestCompCond} nor condition \ref{def:weakCompCond} imply that the grid is loop-free.
\end{remark}

\begin{remark}
Any $2$-dimensional triangulation is weakly compatible, as it only contains elements of type $0$ and $1$.
\end{remark}

\begin{lemma}[Every $d$-th uniform refinement is conforming]
\label{lem:drefconf}
Let $\grid$ be conforming and only contain elements of type $t \in \{0,1\}$. Then every $d$-th uniform \NVB refinement is conforming.
\end{lemma}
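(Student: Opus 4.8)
The plan is to use that conformity is a local property: the $d$-fold uniform refinement $\grid_d$ is conforming if and only if, for every pair of neighbours $\elm_i,\elm_j\in\grid^d$ with common face $\face=\elm_i\cap\elm_j$, the triangulation that $d$ uniform \NVB refinements of $\elm_i$ induce on $\face$ coincides with the one that $d$ uniform \NVB refinements of $\elm_j$ induce on $\face$. Inside a single element the $d$-fold uniform refinement is always a conforming subdivision, and after refinement a child coming from $\elm_i$ and a child coming from $\elm_j$ can only touch along (a sub-simplex of) the refinement of a common face $\face$ of $\grid$; so the claim for $\grid_d$ reduces to matching these two induced refinements across each interior face of $\grid$.

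First I would pin down, from each side, what the induced refinement of $\face$ actually is. Since $\elm_i$ has type $0$ or $1$, Lemma~\ref{lem:type0}, Lemma~\ref{lem:type1} and the Trace Meshes corollary show that $\face$ is itself a sub-simplex of type $0$ or $1$ (type $1$ exactly when $\elm_i$ is type $1$ and $z^{*}\in\face$), and that the trace of $\grid_d$ on $\face$ is precisely the $(d-1)$-fold uniform \NVB refinement of $\face$, viewed as a stand-alone $(d-1)$-simplex, carrying the ordering and type that $\face$ inherits from $\elm_i$ up to \NVB-equivalence. The same description holds with $\elm_j$ in place of $\elm_i$. Hence both induced refinements are $(d-1)$-fold uniform refinements of the \emph{same} $(d-1)$-simplex $\face$, and the only thing that could make them differ is the \NVB-equivalence class of the ordering used on each side.

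Next I would rule out exactly that discrepancy. Because $\grid$ fulfils the weakest compatibility condition of Definition~\ref{def:weakestCompCond} — it arises in the setting of Definition~\ref{def:weakCompCond} as a refinement of a weakly compatible triangulation, and this condition is preserved under uniform refinement via the Trace Meshes corollary — the ordering and type of $\face$ coincide from the $\elm_i$-side and the $\elm_j$-side up to \NVB-equivalence, i.e.\ $\RT(\face)$ computed from $\elm_i$ equals $\RT(\face)$ computed from $\elm_j$. Two \NVB-equivalent orderings of a simplex produce the same refinement tree and therefore, level by level, the same family of sub-simplices under uniform refinement; in particular they agree at level $d-1$. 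Thus the two triangulations induced on $\face$ are identical, and $\grid_d$ is conforming.

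It remains to iterate. Under $d$ uniform \NVB steps the type of each simplex cycles through $0,1,\dots,d-1$ and returns to its initial value, so $\grid_d$ again contains only elements of type $0$ and $1$; together with the conformity just established, the same argument applies to $\grid_d$, and by induction $\grid_{kd}$ is conforming for every $k\ge 1$. The main obstacle is the third step, i.e.\ being certain that a shared face receives \NVB-equivalent orderings from its two sides: for $d\le 3$ an interface has dimension at most $2$ and its uniform refinement is combinatorially canonical, but already for $d=4$ a tetrahedral interface admits several combinatorially distinct uniform refinements, so one genuinely needs the compatibility information encoded in Definition~\ref{def:weakestCompCond}, not merely conformity of $\grid$.
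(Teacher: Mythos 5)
Your proof follows the same route as the paper's: decompose $\grid$ into single-element triangulations, use Lemmas~\ref{lem:type0} and~\ref{lem:type1} to identify the induced $(d-1)$-fold uniform refinement of each face, and invoke Condition~\ref{def:weakestCompCond} to match the two induced refinements across every interior face. You are in fact more explicit than the paper on two points it leaves implicit: that the weakest compatibility condition (and not mere conformity of $\grid$) is what is really being used, and that the statement ``every $d$-th refinement'' requires iterating the argument, which works because types return to $\{0,1\}$ after $d$ uniform bisections.
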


\begin{proof}
We split the triangulation $\grid$ into its elements and  investigate \new{each} $\grid(\elm)$. Every triangulation $\grid(\elm)$ is refined $d$ times uniformly. As all elements are type $0$ or $1$, we know from Lemmas \ref{lem:type0} and \ref{lem:type1} that all $d-1$-dimensional subsimplices $\face \in \grid^{d-1}(\elm)$ have been uniformly refined $d-1$ times. Now we reassemble the triangulation $\grid$ from the element-triangulations $\grid(\elm)$. Condition \ref{def:weakestCompCond} ensures that subsimplices that are shared by two elements carry the same refinement structure. So the resulting triangulation is conforming.
\end{proof}

From Lemma \ref{lem:drefconf} follows directly that under Condition \ref{def:weakCompCond} there always is a conforming finer grid, that only contains elements of type $t \in \{0,1\}$, which leads to the following corollary. 

\begin{corollary}
The iterative \NVB refinement algorithm terminates on weakly compatible grids for any set $\marked$ of elements marked for refinement.
\end{corollary}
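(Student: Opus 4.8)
The plan is to deduce the claim from Lemma~\ref{lem:drefconf} and Definition~\ref{def:weakCompCond} by a domination argument in the infinite bisection forest over $\grid$. Since $\grid$ is weakly compatible, Definition~\ref{def:weakCompCond} provides a conforming refinement $\widehat{\grid}$ of $\grid$ that only contains elements of type $t\in\{0,1\}$, and by Lemma~\ref{lem:drefconf} its $d$-fold uniform \NVB refinement $\widehat{\grid}_d$ is again conforming. The first point I would record is that $\widehat{\grid}_d$ not only refines $\grid$ but \emph{properly} subdivides every $\elm\in\grid$: each $\elm\in\grid$ is either already properly subdivided in $\widehat{\grid}$, or it is a leaf of $\widehat{\grid}$ and is then bisected when passing to $\widehat{\grid}_d$. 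Hence, for every $\elm\in\grid$, the restriction $\widehat{\grid}_d|_\elm$ is a non-trivial bisection refinement of $\elm$, so its first bisection is the bisection of the refinement edge $\edge_\elm$; in particular the two \NVB-children of $\elm$ are again coarsenings of $\widehat{\grid}_d$.

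I would then describe the iterative algorithm as: (i) bisect once every $\elm\in\marked$, producing a (possibly non-conforming) refinement $\grid'$ of $\grid$; (ii) run the \NVB conforming closure, repeatedly bisecting any element whose refinement edge carries a hanging node, until the mesh is conforming. The key assertion, proved by induction over the closure steps, is that \emph{every} mesh arising in the run is a coarsening of the fixed conforming mesh $\widehat{\grid}_d$. For the base case, $\grid'$ is obtained from $\grid$ by performing exactly one refinement-edge bisection on each marked element, which by the previous paragraph $\widehat{\grid}_d$ also performs; hence $\widehat{\grid}_d$ refines $\grid'$ elementwise, and thus globally since both are bisection refinements of the same $\grid$.

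For the inductive step, suppose the current closure mesh $\grid''$ is a coarsening of $\widehat{\grid}_d$ and the closure bisects $\elm''\in\grid''$ because a neighbour $\elm_n$ across a face of $\elm''$ has been refined there, creating a hanging node. Since $\widehat{\grid}_d$ refines $\grid''$ it has subdivided $\elm_n$ at least as much, and since $\widehat{\grid}_d$ is conforming the induced refinement of the shared face must match from the $\elm''$-side, which forces $\widehat{\grid}_d$ to have bisected $\elm''$ at least once; the two children of $\elm''$ produced by the closure step are therefore still coarsenings of $\widehat{\grid}_d$, and the updated mesh is again a coarsening of $\widehat{\grid}_d$. This closes the induction. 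Since every closure step strictly increases the number of elements while all meshes remain below the fixed finite mesh $\widehat{\grid}_d$, only finitely many steps are possible, so the algorithm terminates with the conforming \NVB closure of $\grid'$. The same argument applies if each marked element is to be bisected $k>1$ times, replacing $\widehat{\grid}_d$ by $\widehat{\grid}_{Nd}$ with $N$ large enough that $\widehat{\grid}_{Nd}$ subdivides every $\elm\in\grid$ by at least $k$ bisections, which is still conforming by Lemma~\ref{lem:drefconf}.

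The step I expect to be the main obstacle is the inductive step, precisely the claim that conformity of $\widehat{\grid}_d$ forces it to have bisected $\elm''$ whenever a neighbour of $\elm''$ has been refined across their common face in $\grid''$. Making this rigorous needs a careful formalization of the bisection forest, of the coarsening partial order, and of the ``induced refinement of a face seen from either side'' --- which is exactly what Definition~\ref{def:weakestCompCond} and the Trace Meshes corollary together with Lemmas~\ref{lem:type0}--\ref{lem:type2} are set up to control --- so that the implication becomes a clean lemma rather than a hand-wave. The remaining ingredients, namely that a strictly increasing sequence of meshes trapped below a fixed finite mesh must be finite, and that the first bisection of each marked element coincides with a bisection performed by $\widehat{\grid}_d$, are routine.
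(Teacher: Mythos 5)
Your argument is correct and is essentially the paper's own reasoning: the paper derives the corollary in one line from Lemma~\ref{lem:drefconf} and Definition~\ref{def:weakCompCond} (``there always is a conforming finer grid containing only elements of type $0$ or $1$''), and your domination argument --- trapping every mesh produced by the iterative closure below the fixed conforming overlay $\widehat{\grid}_{Nd}$ --- is exactly the standard way of making that one-line deduction rigorous. The inductive step you flag as the main obstacle goes through as you sketch it, since a conforming refinement of the current mesh that contains the hanging node as a vertex cannot contain the unbisected element $\elm''$, and any nontrivial \NVB refinement of $\elm''$ necessarily begins with the bisection of its refinement edge.
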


As we will refer to the standard compatibility condition as strong compatibility condition in the remainder of this article we also introduce the notion of reflected neighbors and the strong compatibility condition.

\begin{definition}[Reflected Neighbors]
Two elements $\elm = \conv\{z_0,\ldots,z_{d-1},u\}$
  and $\elm' = \conv \{z_0,\ldots,z_{d-1},v\}$ are called reflected neighbors, if and only if the types $t_\elm = t_{\elm'}$ and the ordering coincides with $u$ and $v$ being at the same position up to \NVB equivalence.
\end{definition}

\begin{definition}[Local Strong Compatibility]
A \new{face} $\face \in \grid^{d-1}$ is called strongly compatible if and only if the two adjacent elements are reflected neighbors, or their direct children, that are adjacent to $\face$ are reflected neighbors.
\end{definition}

\begin{definition}[Strong Compatibility Condition]
\label{def:stronggrid}
A triangulation $\grid$ is strongly compatible, if all faces $\face \in \grid^{d-1}$ are strongly compatible.
\end{definition}

\begin{remark}
Condition \ref{def:stronggrid} implies that all elements in the grid are of the same type.
\end{remark}

We extend the local strong compatibility to include neighbors that differ by one in type.

\begin{definition}[Local Quasi-strong Compatibility]
\label{def:quasistrong}
Let $\face \in \grid^{d-1}$ have the two adjacent elements $\elm, \elm'$ that differ by one in type with $t_{\elm'} = (t_{\elm} +1 ) \mod d$. Then $\face$ is called quasi-strongly compatible if and only if the child of $\elm$ that contains $\face$ is a reflected neighbor of $t_{\elm'}$.
\end{definition}

\new{This definition mimics the behaviour within adaptively refined strongly compatible grids. All initial elements in these grids are of the same type and neighboring refined elements can only differ by one in generation, neighboring elements can also only differ by one in type (modulo $d$). Such refined elements originating from a strongly compatible grid fulfil Definition \ref{def:quasistrong}. }

\section{Algorithm} \label{sec:algo}

The algorithm we design to actually reach a weakly compatible state is straightforward in any dimension.

\begin{algorithm2e}
\caption{A renumbering algorithm to satisfy Condition \ref{def:weakCompCond} }
\label{algo:generic}
Let $\vertices = \grid^0$ be the vertices of the triangulation $\grid$. Then we divide $\vertices$ into disjoint subsets $\vertices_i \subset \vertices, i \in \{0,1\}, \vertices_0 \cup \vertices_1 = \vertices$ and provide an ordering $>_i$ for each of them.

For all $\elm = [z_0, \ldots, z_n]_{t_\elm} \in \grid$ we set $[z_1,\ldots, z_{t_\elm}] = \elm \cap \vertices_1$ in order of $>_1$, $[z_0,z_{t_\elm +1},\ldots,z_n] = \elm \cap \vertices_0$ in order of $>_0$ and $t_\elm = \# (\elm \cap \vertices_1)\mod d$.

If $\elm \cap \vertices_0 = \emptyset$, we set $t_\elm =0 $ and all vertices are sorted ascendingly from $z_0$ to $z_n$ with respect to $>_1$.

\end{algorithm2e}

\begin{theorem}
The resulting triangulation of Algorithm \ref{algo:generic} is weakly compatible.
\end{theorem}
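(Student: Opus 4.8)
The statement to prove unfolds, via Definition \ref{def:weakCompCond}, into two tasks: (a) the output triangulation satisfies the weakest compatibility condition of Definition \ref{def:weakestCompCond}; and (b) it admits a conforming refinement containing only elements of type $0$ and $1$. The plan is to treat these in turn, in both cases exploiting that Algorithm \ref{algo:generic} labels every element by one and the same \emph{global} rule, together with the description of induced face structures in Lemmas \ref{lem:type0}, \ref{lem:type1} and \ref{lem:type2}.

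For (a), the key point is that the type and refinement ordering which an element $\elm$ induces on one of its faces $\face$ depend only on $\face$ (through the set $\face\cap\vertices_1$ and the global orders $>_0,>_1$), not on $\elm$. Writing $\vertices_i=\elm\cap\vertices_1$ for the ``starred'' vertices of $\elm$ as in Lemma \ref{lem:type2}, that lemma (and Lemmas \ref{lem:type0}, \ref{lem:type1} for the low types) gives $t_\face=\#(\face\cap\vertices_i)=\#(\face\cap\vertices_1)$ together with an induced ordering of $\face$ that coincides, up to \NVB-equivalence, with the ordering of $\elm$ restricted to $\face$. Since in every element the $\vertices_1$-vertices occupy, in $>_1$-order, the low positions and the $\vertices_0$-vertices, in $>_0$-order, the remaining ones, this restriction renormalises to exactly the ordering Algorithm \ref{algo:generic} assigns to $\face$ viewed as its own simplex; here one uses the explicit shape of the induced orderings in Lemmas \ref{lem:type0}--\ref{lem:type2} together with the elementary \NVB-equivalence identities (such as reversal of a type-$0$ ordering). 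Applying this to both elements adjacent to a shared face $\face$ shows that their induced structures on $\face$ agree, which is Definition \ref{def:weakestCompCond}.

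For (b), I would construct the refinement explicitly: bisect each element uniformly enough times that, on the one hand, its type becomes $0$ or $1$, and on the other hand every one of its faces reaches the canonical subdivision described by Lemmas \ref{lem:type0}--\ref{lem:type2}. Concretely, elements of type $0$ or $1$ are bisected $d$ times (they become, respectively, type $0$ and type $1$, and by Lemmas \ref{lem:type0}, \ref{lem:type1} each of their faces --- necessarily of type $0$ or $1$ --- is refined into the uniform $(d-1)$-fold \NVB refinement of its canonical ordering), while elements of type $t\ge 2$ are bisected $2d-t$ times (they become type $0$, and by Lemma \ref{lem:type2} every face $\face$ is refined into the canonical all-edges-bisected subdivision of its type-$\#(\face\cap\vertices_1)$ ordering, which for $\#(\face\cap\vertices_1)\le 1$ is again the uniform $(d-1)$-fold refinement). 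By part (a) the two elements meeting along a face $\face$ see one and the same canonical ordering of $\face$, so by construction they induce the same subdivision on $\face$; hence the bisected grid is conforming, and since all its elements have type $0$ or $1$ it witnesses Definition \ref{def:weakCompCond}.

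The step I expect to be the real obstacle is the one just used in (b): verifying that a type-$t$ element bisected the prescribed number of times refines \emph{each} of its faces exactly into the canonical subdivision --- no more and no fewer bisections, with the correct induced ordering of every sub-simplex. This is delicate because the number of times a face gets bisected is not a simple function of its type and the number of bisections of the ambient element: a type-$0$ face of a type-$0$ element keeps pace with the element, whereas a type-$0$ face of a type-$(d-1)$ element sits idle during the first bisection, so the per-type counts $d$ and $2d-t$ must be chosen, and justified, precisely so that such discrepancies cancel --- which is exactly why Lemmas \ref{lem:type0}, \ref{lem:type1}, \ref{lem:type2} are needed in the strong form stated above. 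A smaller point to clean up is the well-definedness of Algorithm \ref{algo:generic} when $\#(\elm\cap\vertices_1)\ge d$, so that the $\bmod\,d$ in $t_\elm$ interacts with the positions $z_1,\dots,z_{t_\elm}$; this is handled by reading the algorithm as placing the $\vertices_1$-vertices on positions $z_1,\dots,z_{\#(\elm\cap\vertices_1)}$ and only reducing the recorded type modulo $d$, with the stated special case when $\elm\cap\vertices_0=\emptyset$.
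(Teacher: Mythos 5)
Your part (a) is essentially the paper's argument (the induced type and ordering of a face depend only on $\face\cap\vertices_1$ and the global orders, hence agree from both sides) and is fine. The gap is in part (b), at exactly the step you flagged as the likely obstacle: the per-type refinement counts you prescribe ($d$ bisections for types $0,1$ and $2d-t$ for type $t\ge 2$) do \emph{not} produce a conforming triangulation. A face $\face$ with $\#(\face\cap\vertices_1)=k\ge 1$ can be shared by an element of type $k$ and one of type $k+1$, and these two elements refine $\face$ to different depths. Concretely, take $d=3$, $\elm=[z_0,z^*,z_1,w]_1$ and $\elm'=[z_0,z_1^*,z_2^*,z_1]_2$ sharing $\face=\{z_0,z_1^*,z_1\}$ (with $z^*=z_1^*$). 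Refining $\elm$ three times bisects each original edge of $\face$ exactly once and nothing else (Lemma \ref{lem:type1}), giving $4$ triangles on $\face$. Refining $\elm'$ four times first bisects $\overline{z_0z_1}$, producing the type-$0$ children $[z_0,z_{01},z_1^*,z_2^*]_0$ and $[z_1,z_{01},z_1^*,z_2^*]_0$, and the subsequent three uniform refinements bisect \emph{every} edge of these children once (Lemma \ref{lem:type0}) --- including the new edges $\overline{z_0z_{01}}$, $\overline{z_1z_{01}}$, $\overline{z_{01}z_1^*}$ lying in $\face$ --- giving $8$ triangles on $\face$. So the union of your individually refined elements is not conforming, and the witness for Definition \ref{def:weakCompCond} is not obtained this way.

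The paper's construction avoids this by choosing a globally consistent target: refine each element only until it reaches type $0$, i.e.\ $d-\#(\elm\cap\vertices_1)$ uniform bisections (a full cycle of $d$ when $\elm\cap\vertices_1=\emptyset$, and no bisections when at most one vertex lies in $\vertices_0$). By the structure of the refinement trees this bisects exactly the edges with \emph{both} endpoints in $\vertices_0$, each exactly once, and no newly created edge. Since that set of bisected edges --- and, by your part (a), the induced ordering on each face --- depends only on the global partition $\vertices_0\cup\vertices_1$ and the global orders, the induced subdivisions of a shared face agree from both sides; the result is conforming and consists only of type-$0$ elements. If you want to keep your ``refine each element a type-dependent number of times'' framework, the counts must be $d-t$ rather than $d$ and $2d-t$; with your counts the argument cannot be repaired.
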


\begin{proof}
The weakest compatibility follows directly from Lemmas \ref{lem:type0}, \ref{lem:type1}, \ref{lem:type2}.

For the second property of Definition \ref{def:weakCompCond} we refine every simplex individually until it is type 0 without conforming closure. This coincides with refining all edges with both vertices contained in $\vertices_0$ and no other edge for every simplex. Hence the resulting triangulation is conforming and only contains elements of type 0.
\end{proof}

\begin{remark}
Note that the descendants of all faces $\face$ that are quasi-strongly compatible in $\grid$ will be strongly compatible in the constructed descendant.
\end{remark}

\subsection{Variants of Choice of the sets}
Now we show some variants of the choice of sets to see how this actually relates to usual grid construction.

\begin{itemize}
\item[\textbf{OT0}] Only Type 0:\\
All elements are type 0 (i.e. $\vertices_0 = \vertices, \vertices_1 = \emptyset$ ). This may be used to reconstruct a strongly compatible mesh in the standard way.
\item[\textbf{ILE}] Initial Longest Edge:\\
Let $C_{ILE} \in \N$ be a constant threshold. Then we define 
\[\vertices_0 = \{ v \in \vertices : v \text{ is in at least } C_{ILE} \text{ Longest Edges } \} \] and $\vertices_1 = \vertices  \setminus \vertices_0$. This results in refining the initial longest edge first for some simplices.
\item[\textbf{LAE}] Least Adjacent Elements:\\
The idea is to refine opposite of vertices that have least adjacent elements. Let $C_{LAE} \in \N$ be a constant threshold. Then we define
\[\vertices_1 = \{ v \in \vertices : v \text{ is in at most } C_{LAE} \text{ Elements} \} \] 
and $\vertices_0 = \vertices \setminus \vertices_1$. This results in refining opposite of the biggest angle for some simplices.
\end{itemize}

\new{These three set choices  are in no way a thorough classification of the possible set choices but a possible selection of straightforward implementations. Another possibility would be for example to} collect all vertices belonging to Longest edges that are longer than average into $\vertices_0$ or collect the vertices with the maximum volume angle into $\vertices_1$. It is also possible to combine strategies.

\subsection{Variants of the Choice of Ordering}

Up to now, we only constructed the sets $\vertices_{(0,1)}$. We are still free to choose the ordering. Figure \ref{fig:orderings} depicts  different results of the algorithm \ref{algo:generic} in 2d. The ordering in Figure \ref{fig:badOrdering} is lowest in the lower left corner and highest in the upper right and increases row-wise. This naive ordering leads to an expensive conforming closure. On the other hand in Figures \ref{fig:goodordering} and \ref{fig:goodordering2} the orderings are created with alternative methods (SRN and SRN2) and lead to all neighbors being strongly compatible.

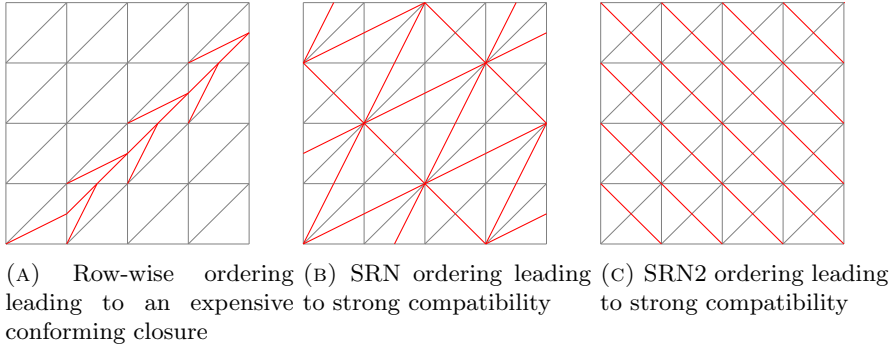
\begin{figure}[t]
  \begin{center}
    \begin{subfigure}[t]{.3\textwidth}
\begin{tikzpicture}[scale = 0.8, every node/.style={scale=0.8}]
\foreach \i [evaluate={\ii=int(\i-1);}] in {0,...,4}{
  \foreach \j [evaluate={\jj=int(\j-1);}] in {0,...,4}{
    \coordinate [shift={(\j,\i)}] (n-\i-\j) at (0,0);
\ifnum\i>0
  \draw [help lines] (n-\i-\j) -- (n-\ii-\j)coordinate[pos=.5](n-\i-\ii-\j-\j);
\fi
\ifnum\j>0
  \draw [help lines] (n-\i-\j) -- (n-\i-\jj)coordinate[pos=.5](n-\i-\i-\j-\jj);
  \ifnum\i>0
    \draw [help lines] (n-\i-\j) -- (n-\ii-\jj)coordinate[pos=.5](n-\i-\ii-\j-\jj);
  \fi
\fi
}}
\foreach \k [evaluate={\kk=int(\k+1); \kkk=int(\k+2);}] in {0,...,3} {
\draw[red] (n-\k-\k) -- (n-\kk-\k-\kk-\kk);
\ifnum\k<3
  \draw[red] (n-\kk-\k-\kk-\kk) -- (n-\kk-\kk-\kkk-\kk);
  \draw[red] (n-\k-\kk) -- (n-\kk-\kk-\kkk-\kk);
  \draw[red] (n-\kkk-\kk-\kkk-\kkk) -- (n-\kk-\kk-\kkk-\kk);
\fi
}
\end{tikzpicture}
\caption{Row-wise ordering leading to an expensive conforming closure}\label{fig:badOrdering}
\end{subfigure}
    \begin{subfigure}[t]{.3\textwidth}
\begin{tikzpicture}[scale=0.8, every node/.style={scale=0.8}]
\foreach \i [evaluate={\ii=int(\i-1);}] in {0,...,4}{
  \foreach \j [evaluate={\jj=int(\j-1);}] in {0,...,4}{
    \coordinate [shift={(\j,\i)}] (n-\i-\j) at (0,0);
\ifnum\i>0
  \draw [help lines] (n-\i-\j) -- (n-\ii-\j)coordinate[pos=.5](n-\i-\ii-\j-\j);
\fi
\ifnum\j>0
  \draw [help lines] (n-\i-\j) -- (n-\i-\jj)coordinate[pos=.5](n-\i-\i-\j-\jj);
  \ifnum\i>0
    \draw [help lines] (n-\i-\j) -- (n-\ii-\jj)coordinate[pos=.5](n-\i-\ii-\j-\jj);
  \fi
\fi
}}

\draw[red] (n-0-0) -- (n-1-0-1-1);
\draw[red] (n-0-0) -- (n-1-1-1-0);

\draw[red] (n-2-1) -- (n-1-1-1-0);
\draw[red] (n-2-1) -- (n-2-1-0-0);
\draw[red] (n-2-1) -- (n-3-2-1-0);
\draw[red] (n-2-1) -- (n-3-3-2-1);
\draw[red] (n-2-1) -- (n-3-2-2-2);
\draw[red] (n-2-1) -- (n-2-1-2-1);

\draw[red] (n-1-2) -- (n-1-0-1-1);
\draw[red] (n-1-2) -- (n-0-0-2-1);
\draw[red] (n-1-2) -- (n-1-0-3-2);
\draw[red] (n-1-2) -- (n-2-1-3-3);
\draw[red] (n-1-2) -- (n-2-2-3-2);
\draw[red] (n-1-2) -- (n-2-1-2-1);

\draw[red] (n-3-3) -- (n-4-3-3-2);
\draw[red] (n-3-3) -- (n-3-2-4-3);
\draw[red] (n-3-3) -- (n-4-3-4-4);
\draw[red] (n-3-3) -- (n-4-4-4-3);
\draw[red] (n-3-3) -- (n-2-2-3-2);
\draw[red] (n-3-3) -- (n-3-2-2-2);

\draw[red] (n-3-0) -- (n-4-4-1-0);
\draw[red] (n-3-0) -- (n-4-3-1-1);
\draw[red] (n-3-0) -- (n-3-2-1-0);

\draw[red] (n-0-3) -- (n-1-0-4-4);
\draw[red] (n-0-3) -- (n-1-1-4-3);
\draw[red] (n-0-3) -- (n-1-0-3-2);

\draw[red] (n-2-4) -- (n-1-1-4-3);
\draw[red] (n-2-4) -- (n-3-2-4-3);
\draw[red] (n-2-4) -- (n-2-1-3-3);
\draw[red] (n-4-2) -- (n-4-3-1-1);
\draw[red] (n-4-2) -- (n-4-3-3-2);
\draw[red] (n-4-2) -- (n-3-3-2-1);
\end{tikzpicture}
\caption{SRN ordering leading to strong compatibility}\label{fig:goodordering}
\end{subfigure}
    \begin{subfigure}[t]{.3\textwidth}
\begin{tikzpicture}[scale=0.8, every node/.style={scale=0.8}]
\foreach \i [evaluate={\ii=int(\i-1);}] in {0,...,4}{
  \foreach \j [evaluate={\jj=int(\j-1);}] in {0,...,4}{
    \coordinate [shift={(\j,\i)}] (n-\i-\j) at (0,0);
\ifnum\i>0
  \draw [help lines] (n-\i-\j) -- (n-\ii-\j)coordinate[pos=.5](n-\i-\ii-\j-\j);
\fi
\ifnum\j>0
  \draw [help lines] (n-\i-\j) -- (n-\i-\jj)coordinate[pos=.5](n-\i-\i-\j-\jj);
  \ifnum\i>0
    \draw [help lines] (n-\i-\j) -- (n-\ii-\jj)coordinate[pos=.5](n-\i-\ii-\j-\jj);
  \fi
\fi
}}
\tikzstyle{refinement} = [red]
\foreach \k in {1,2,3,4}{
  \draw[refinement] (n-0-\k) -- (n-\k-0);
  \draw[refinement] (n-\k-4) -- (n-4-\k);
}
\end{tikzpicture}
\caption{SRN2 ordering leading to strong compatibility}\label{fig:goodordering2}
\end{subfigure}
\caption{The conforming closure to different orderings for a 2d Kuhn-tesselation.}
\label{fig:orderings}
  \end{center}
\end{figure}

We propose two strategies to construct an ordering, the second one being an improved version of the first one.

\begin{itemize}
  \item[\textbf{SRN}] Successive Reflected Neighbors:\\
Build a global ordering that includes all vertices in $\vertices$ successively. Start with an element $\elm = [z_0,\ldots,z_n]_{t_\elm}$ and add its ordering to the global ordering. Then loop over all neighbors and, if the new vertex has not been added to the ordering, insert the new vertex between the replaced vertex and its follower. Then loop over the neighbors of the neighbors and so on.
\item[\textbf{SRN2}] Successive Reflected Neighbors with announced refinement edge:\\
We assume that every simplex announces an edge, which it wants to bisect. In our implementation this is the longest edge, but in principle any edge may be announced.
We slightly alter the Successive Reflected Neigbors strategy.
If the shared face with the neighbor does not contain the refinement edge, we have two options: either insert the new vertex before the first element or after the last. If one of the insertions yields the announced refinement edge, we choose this one.
\end{itemize}

SRN and SRN2 aim at making neighbors of different type (quasi-)strongly compatible, as types can at most differ by $1$ by construction and they create (quasi-)strongly compatible faces opposite of every newly inserted vertex.

A possible implementation of OT0 and SRN2 is shown in Algorithm \ref{algo:possible}. Using a structured cube grid filled with Kuhn-cubes, where every simplex announces the diagonal initially, Algorithm \ref{algo:generic} leads to the standard strongly compatible grid, if the initial element is sorted correctly.

\begin{algorithm2e}
\caption{Possible implementation of methods OT0 and SRN2}
\label{algo:possible}
\KwData{List of active faces $\freefaces$, List of vertices $\vertices$, Mesh $\grid$}
$\freefaces  = \vertices = \emptyset$\\
Choose initial $\elm \in \grid$ \\
Order according to announced refinement edge $\edge(\elm)$\\
$\vertices = \vertices (\elm)$\\
add all non-boundary $\face \in \grid^{d-1}(\elm)$ with $\edge(\elm) \in \face$ at the beginning of $\freefaces$ \\
add the other non-boundary $\face \in \grid^{d-1}(\elm)$ at the end of $\freefaces$ with the flag noRefEdge\\
mark $\elm$ as treated\\
\While{ $\exists$ untreated $\elm \in \grid$ ($\Leftrightarrow \freefaces \neq \emptyset$)}{
Get untreated neighbor $\elm'$ and treated element $\elm$ of first $\face \in \freefaces$\\ 
$v = \grid^{0}(\elm) \setminus \grid^0(\face) \quad v' = \grid^{0}(\elm') \setminus \grid^0(\face)$\\
\eIf{$v' \in \vertices$}
{
  \tcc{do nothing}
}
{  \eIf {noRefEdge set}{
  \tcc{In this case $v$ is the first or last vertex of $\elm$}
insert $v'$ after last or before first vertex of $\elm$, depending on the announced refinement edge\\
}{
Insert $v'$ directly after $v$ into $\vertices$\\
}
}
sort $\elm'$ according to $\vertices$\\
mark $\elm'$ treated\\
\For{non-boundary $\face' \in \grid^{d-1}(\elm)$}{
\eIf{$\face' \in \freefaces$}{
  \tcc{possibly check for strong compatibility}
remove $\face'$ from $\freefaces$\\
}{
\eIf{$\edge(\elm') \subset \face'$ }{add $\face'$ at beginning of $\freefaces$}{add $\face$ at the end of $\freefaces$}
}

}
}

\end{algorithm2e}

\subsection{Quality of produced mesh}

\new{We are interested in two significant mesh quality goals, namely, the \emph{mesh topological  quality} goal that tries to minimize the effort of the conforming closure,
 and the \emph{geometric quality} goal that tries to maximize the shape regularity of the elements.
 
  Strongly compatible grids fulfil the \emph{mesh topological  quality} goal by design.} So a distance to a strongly compatible situation is a way of measuring this goal. 
To this aim we define the following two distances.

\begin{definition}[Distance 1]
Let $\grid$ be a weakly compatible grid sorted by one of the above methods and let $\faces_{SC} \subset \grid^{d-1}$ be the set of faces that are strongly or quasi-strongly compatible. Then we define the distance 
\[
d^1_\grid = \#(\grid^{d-1}) - \#(\faces_{SC}).
\]
\end{definition}

This is a simple,\new{ easily computable} and straight-forward metric that directly tells us whether the grid fulfils the strong compatibility condition. Unfortunately, it does not directly yield information on the effort of the conforming closure, \new{so} to estimate this we define a second distance

\begin{definition}[Distance 2]
Let $\grid$ be a weakly compatible grid sorted by one of the above variants and let $\mathcal{C}_\grid(\elm)$ the conforming closure of refining $\elm$ on the initial grid $\grid$. Then we define the metric 
\[
d^2_\grid = \max_{\elm \in \grid} \#(\mathcal{C}_\grid(\elm)).
\]
\end{definition}

As the refinement becomes more local (cf. \cite{AlkamperKlofkorn:17}) once the whole grid has been refined $d$ times uniformly, we also investigate $d^2_{\grid_d}$ on the $d$ times uniformly refined grid $\grid_d$.

\new{The geometric quality of meshes using bisection (both LEB and \NVB) is always an issue. One of the angles may be divided $2^{d-1}$ times under $d$ uniform bisections. LEB performs a bit better by design, but there is a significant drop in quality for any bisection-based refinement, if the initial grid contains elements that are close to equilateral, which we expect from mesh-generators such as \texttt{TetGen}. In principle it is advisable to start with elements that are similar to Kuhn-simplices. For \NVB we have, that $d$ uniform bisections cover all possible similarity classes and hence will only measure our quality indicators on the initial grid and on the $d$ times uniform refined grid.

The set of indicators we use } are the $d$-dimensional sine, aspect ratios of volumes, faces and edges (min, max and average) and also the maximum number of adjacent elements of vertices and edges.

\section{Numerical Experiments} \label{sec:numerics}

In this section we study the behavior of the presented options 
of the reordering algorithm for a variety of different tetrahedral grids. 
In the implementation first a check is performed whether the provided grids is compatible or not. If the grid is already compatible then no reordering is performed. In general the reordering has to be performed only once per grid. 
The implementation of the presented algorithm is contained in the 
open-source \dune module \dunealugrid \cite{AlDeKlNo:16}.

\subsection{Threshold Study}
The first set of experiments we conducted is a threshold study, where we measure the behaviour of LAE and ILE with threshold values $C_{ILE}, C_{LAE} \in \{0,\ldots,35\}$. If the threshold value is $0$ we obtain in both cases $\vertices_0 = \vertices$, so the case OT0 is implicitly included. We use both suggested orderings SRN and SRN2.

First we test on a sequence of triangulations of the unit cube with decreasing average volume,
that have been used in the
\textit{3D Benchmark on Discretization Schemes for Anisotropic Diffusion
Problems on General Grids} \cite{fvca6_bench:11}. \new{We will call these grids the "Unitcube Sequence".}

\begin{figure}[!ht]
  \begin{center}
  \includegraphics[width=0.3\textwidth]{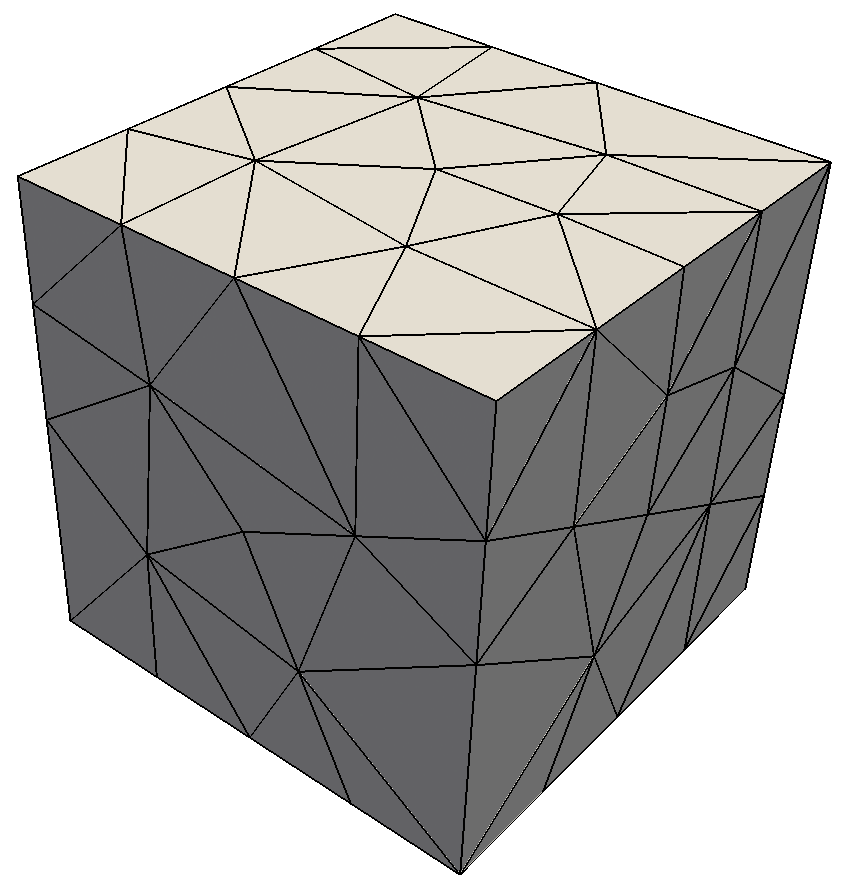}
  \includegraphics[width=0.3\textwidth]{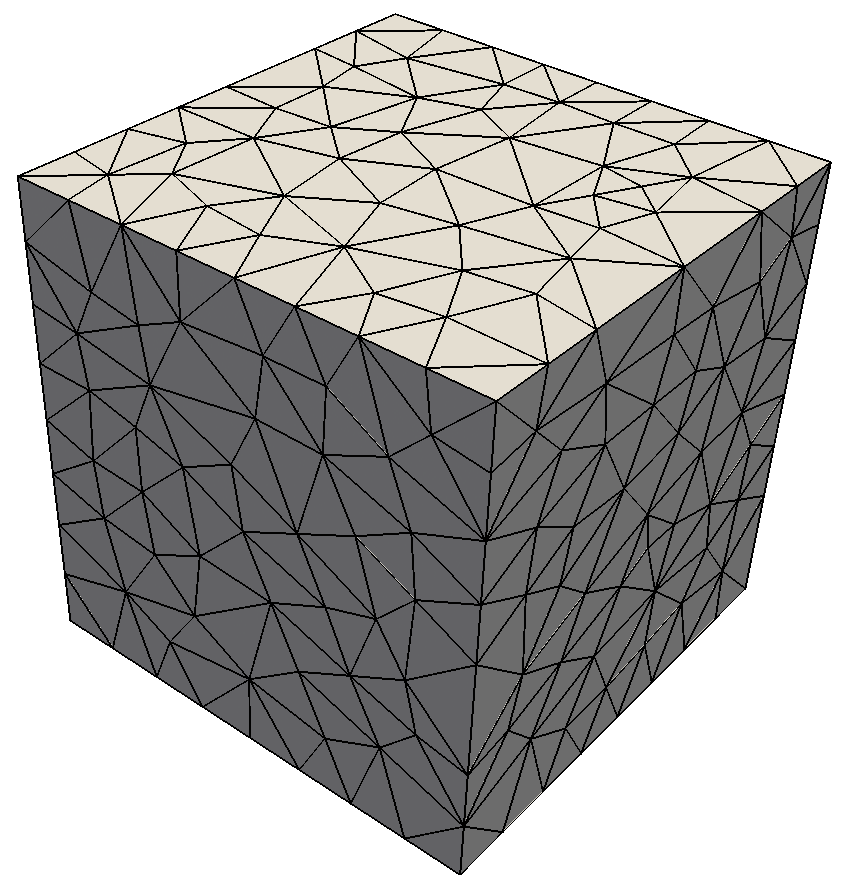}
  \includegraphics[width=0.3\textwidth]{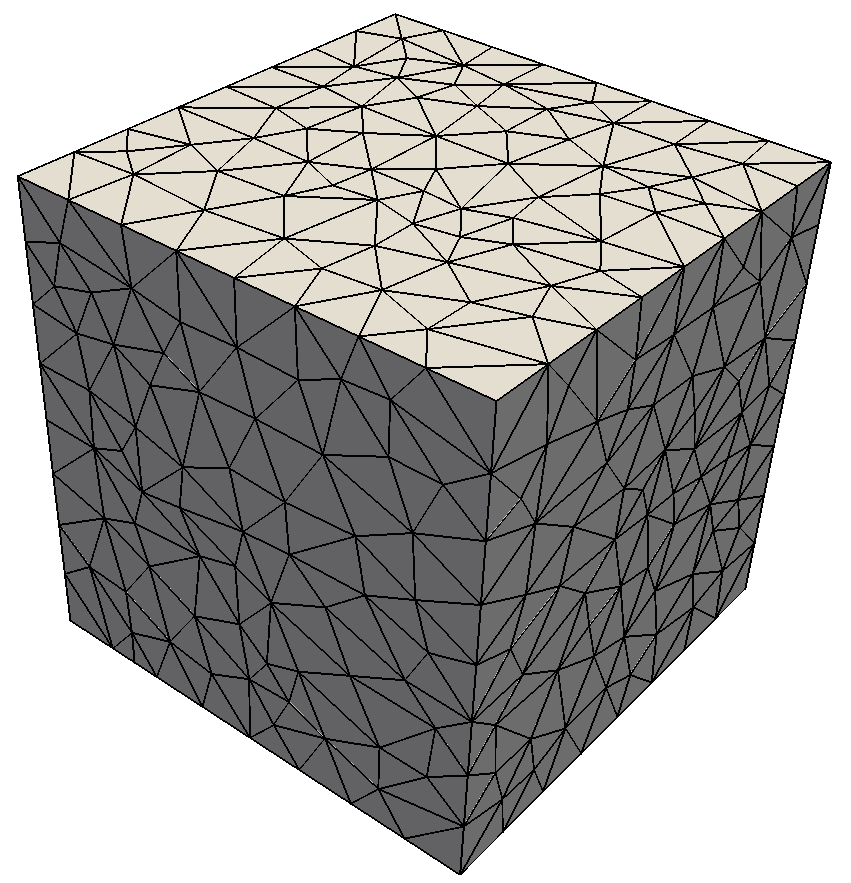}
  \caption{Series of tetrahedral grids discretizing the unit cube used in the 
    \textit{3D Benchmark on Discretization Schemes for Anisotropic Diffusion 
Problems on General Grids} \cite{fvca6_bench:11}.}
  \label{fig:unitcubes}
  \end{center}
\end{figure}

Additionally we will show the same measures for a set of grids representing more
complex geometrical structures. \new{We will call these grids "Realistic Grids" and a
depiction is presented in Figure \ref{fig:realgrids}.}
The tube grid has been previously used for numerical simulation of atherosclerotic plaque formation 
\cite{blood:14}.
The grid representing the head of a human has been used in \cite{head:15}.

\begin{figure}[!ht]
  \begin{center}
  \begin{subfigure}{0.45\textwidth}
  \includegraphics[width=\linewidth]{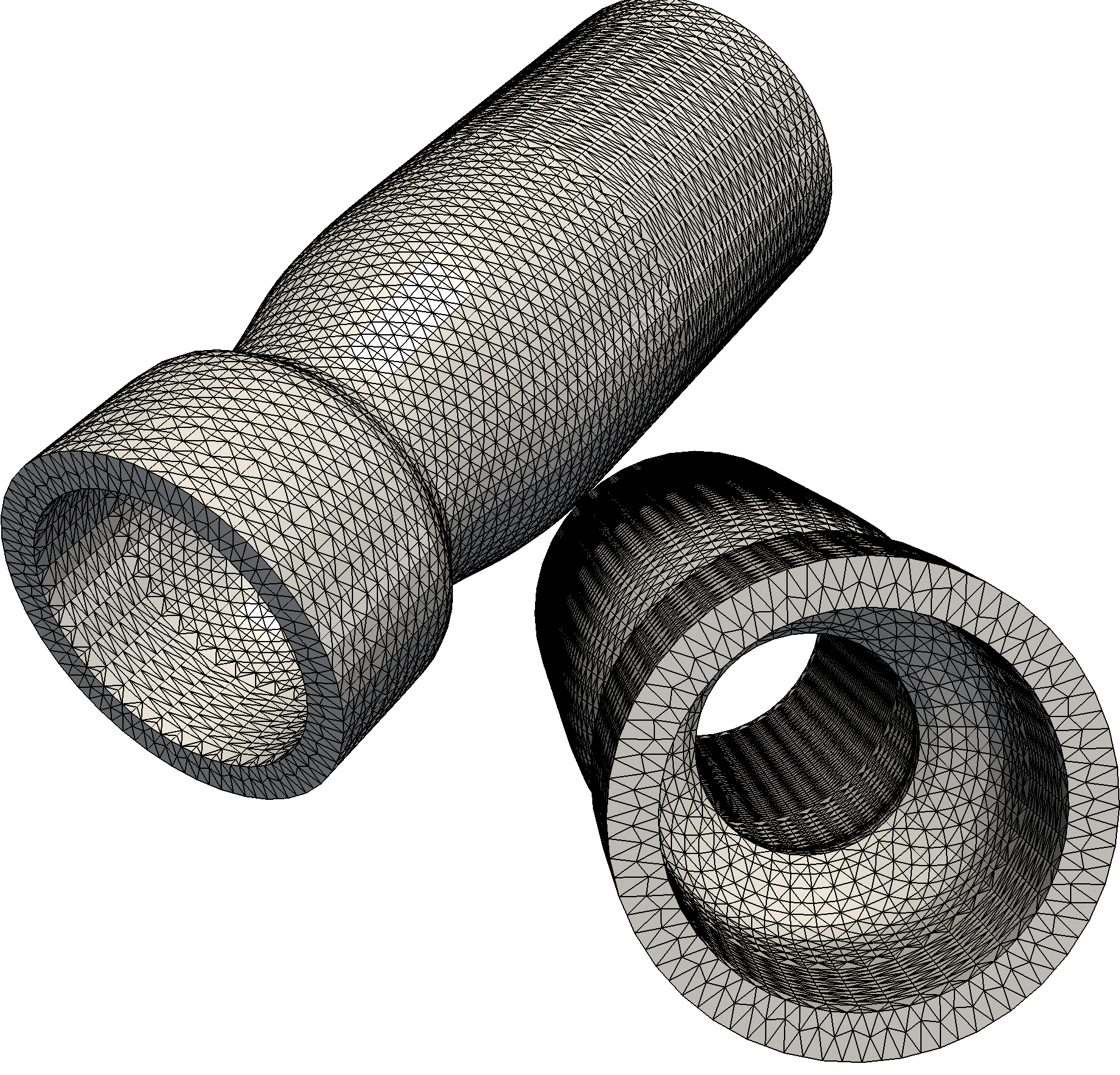}
  \caption{tube}
  \label{fig:tubegrid}
  \end{subfigure}
  \begin{subfigure}{0.45\textwidth}
  \includegraphics[width=\linewidth]{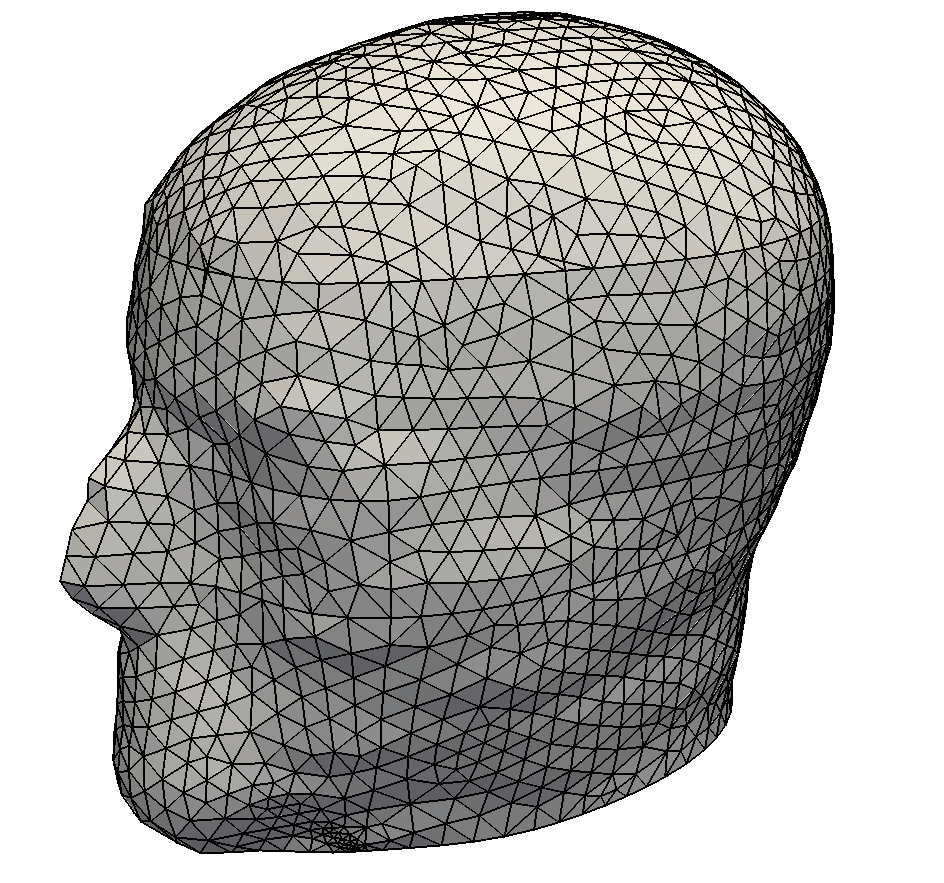}
    \caption{head}
  \label{fig:headgrid}
  \end{subfigure} \\
  \begin{subfigure}{0.4\textwidth}
  \includegraphics[width=\linewidth]{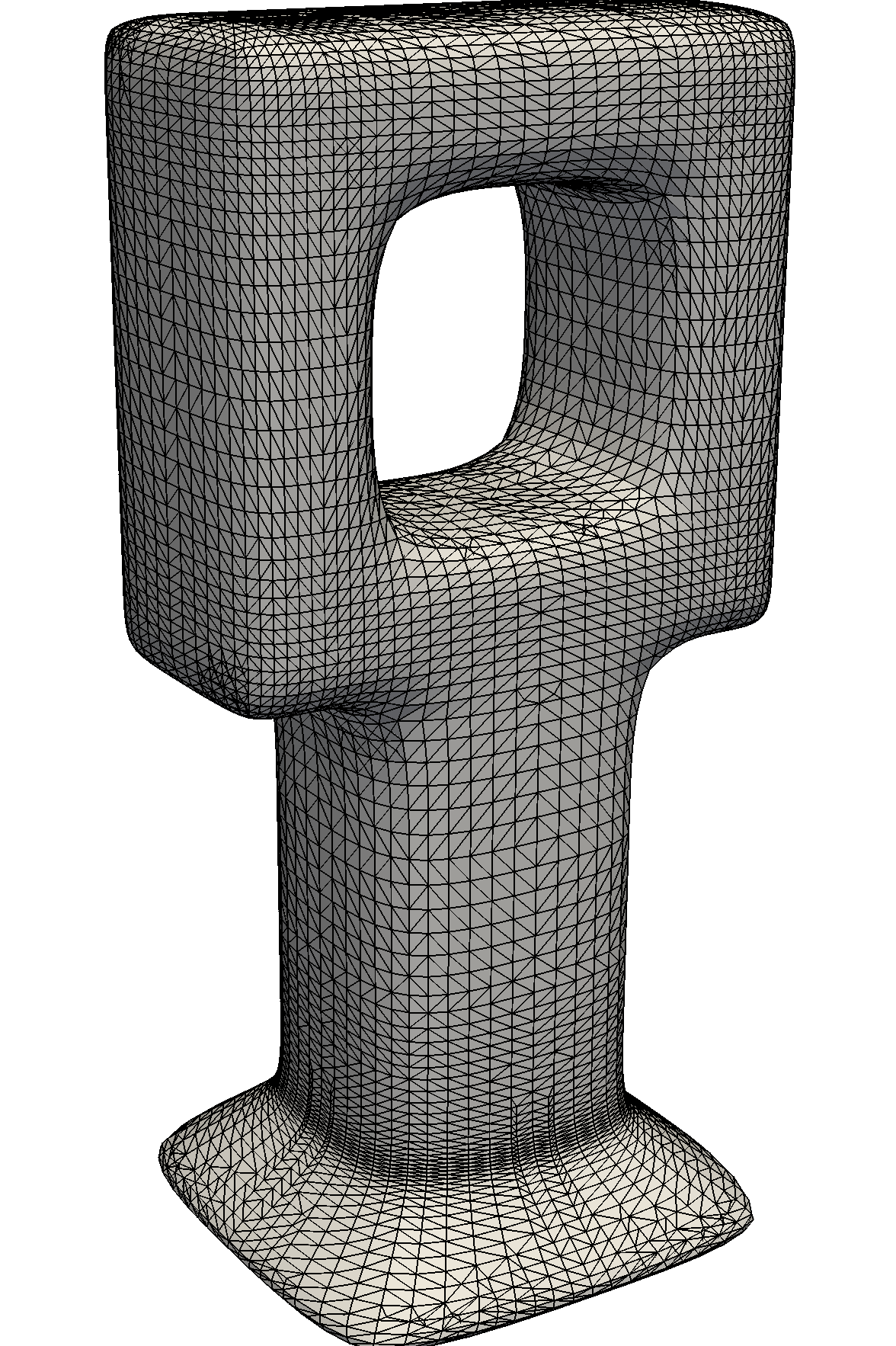}
    \caption{decorstatue}
    \label{fig:decorstatue}
  \end{subfigure}
    \begin{minipage}{0.55\textwidth}
  \begin{subfigure}{\textwidth}
    \begin{center}
  \includegraphics[width=0.9\textwidth]{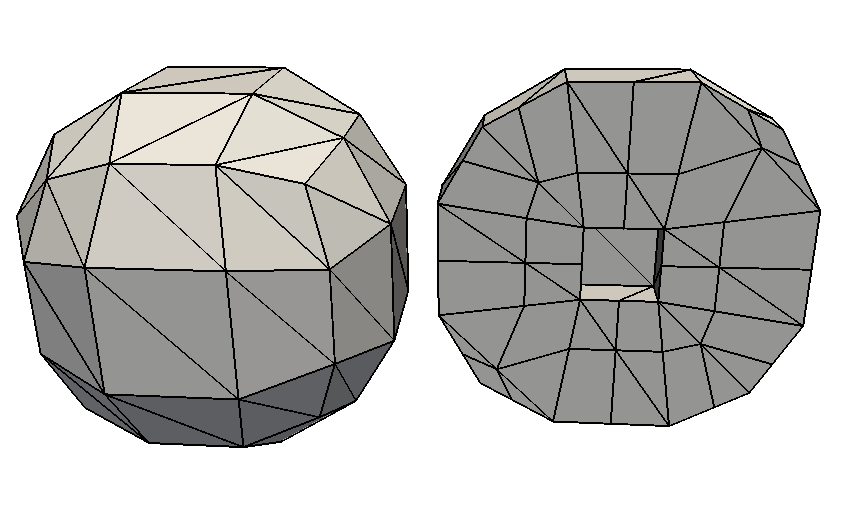} 
      \vskip -5pt
      \caption{Kuhn-grid}
      \label{fig:kuhngrid}
    \end{center}
  \end{subfigure} \ \\ \vskip 5pt
  \begin{subfigure}{0.49\textwidth}
    \begin{center}
  \includegraphics[width=0.9\textwidth]{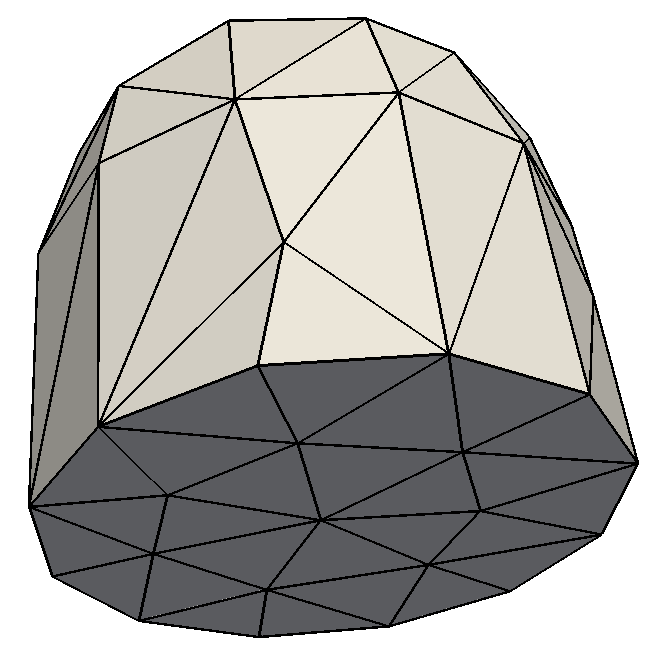} 
      \caption{telescope}
      \label{fig:telescope}
    \end{center}
  \end{subfigure}
  \begin{subfigure}{0.49\textwidth}
    \begin{center}
  \includegraphics[width=0.9\textwidth]{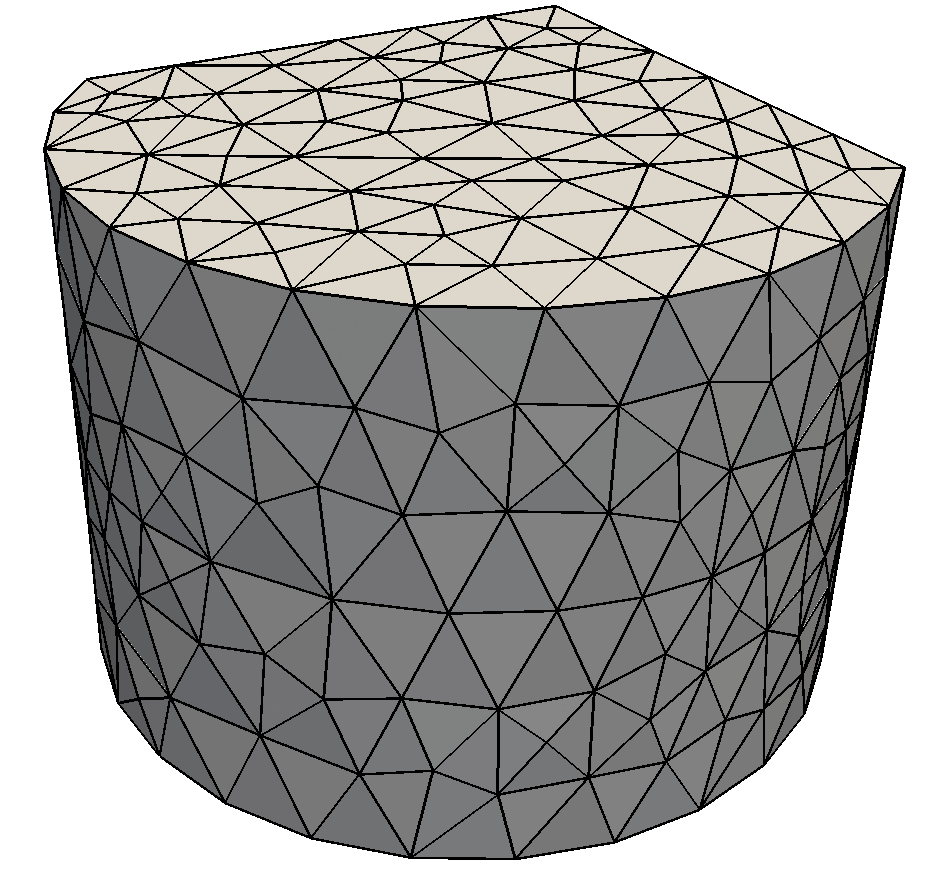} 
      \caption{half circle}
      \label{fig:halfcircle}
    \end{center}
  \end{subfigure}
  \end{minipage}
  \caption{Realistic tetrahedral meshes used for studying the proposed
    algorithms. \ref{fig:tubegrid} has been previously 
    used for numerical simulation of atherosclerotic plaque formation \cite{blood:14}, 
    \ref{fig:headgrid} has been used in \cite{head:15} for electrical impedance tomography simulations, 
    \ref{fig:decorstatue} has been downloaded from \cite{inriapage} and
    generated with \tetgen, \ref{fig:telescope} is part of the test grids in the \dunegrid module \cite{dune24:16}, 
    \ref{fig:kuhngrid} stems from a Kuhn-grid and has
    been modified (vertex renumbering and projection), and \ref{fig:halfcircle} has been generated with \gmsh.}
  \label{fig:realgrids}
  \end{center}
\end{figure}

First we consider the distribution of $\vertices_0,\vertices_1$:
\begin{figure}[ht]
\begin{subfigure}{0.49\textwidth}
\begin{tikzpicture}[scale=0.405]
\begin{axis}[xlabel={$C_{ILE}$}]
\foreach \i in {1,...,6}{
\addplot table[ x expr=\coordindex, y expr= \thisrowno{0} / (\thisrowno{1} + \thisrowno{0}), col sep=space, mark={} ] {./tet.\i.msh.V0V1-1-0.plot};
}
\end{axis}
\end{tikzpicture}
\begin{tikzpicture}[scale=0.405]
\begin{axis}[xlabel={$C_{LAE}$}]
\foreach \i in {1,...,6}{
\addplot table[ x expr=\coordindex, y expr= \thisrowno{0} / (\thisrowno{1} + \thisrowno{0}), col sep=space, mark={} ] {./tet.\i.msh.V0V1-2-0.plot};
}
\end{axis}
\end{tikzpicture}
\caption{Unitcube Sequence}
\end{subfigure}
\begin{subfigure}{0.49\textwidth}
\begin{tikzpicture}[scale=0.405]
\begin{axis}[xlabel={$C_{ILE}$}]
\addplot table[ x expr=\coordindex, y expr= \thisrowno{0} / (\thisrowno{1} + \thisrowno{0}), col sep=space ] {./telescope2ndorder.V0V1-1-0.plot};
\addlegendentry{telescope};
\addplot table[ x expr=\coordindex, y expr= \thisrowno{0} / (\thisrowno{1} + \thisrowno{0}), col sep=space ] {./tube3d.V0V1-1-0.plot};
\addlegendentry{tube};
\addplot table[ x expr=\coordindex, y expr= \thisrowno{0} / (\thisrowno{1} + \thisrowno{0}), col sep=space ] {./TA052.V0V1-1-0.plot};
\addlegendentry{head};
\addplot table[ x expr=\coordindex, y expr= \thisrowno{0} / (\thisrowno{1} + \thisrowno{0}), col sep=space ] {./decorstatue.V0V1-1-0.plot};
\addlegendentry{decorstatue};
\addplot table[ x expr=\coordindex, y expr= \thisrowno{0} / (\thisrowno{1} + \thisrowno{0}), col sep=space ] {./halfcircle.V0V1-1-0.plot};
\addlegendentry{halfcircle};
\addplot table[ x expr=\coordindex, y expr= \thisrowno{0} / (\thisrowno{1} + \thisrowno{0}), col sep=space ] {./hole3.V0V1-1-0.plot};
\addlegendentry{Kuhn-grid};
\end{axis}
\end{tikzpicture}
\begin{tikzpicture}[scale=0.405]
\begin{axis}[legend style={at={(0.03,0.03)},anchor=south west},xlabel={$C_{LAE}$}]
\addplot table[ x expr=\coordindex, y expr= \thisrowno{0} / (\thisrowno{1} + \thisrowno{0}), col sep=space ] {./telescope2ndorder.V0V1-2-0.plot};
\addplot table[ x expr=\coordindex, y expr= \thisrowno{0} / (\thisrowno{1} + \thisrowno{0}), col sep=space ] {./tube3d.V0V1-2-0.plot};
\addplot table[ x expr=\coordindex, y expr= \thisrowno{0} / (\thisrowno{1} + \thisrowno{0}), col sep=space ] {./TA052.V0V1-2-0.plot};
\addplot table[ x expr=\coordindex, y expr= \thisrowno{0} / (\thisrowno{1} + \thisrowno{0}), col sep=space ] {./decorstatue.V0V1-2-0.plot};
\addplot table[ x expr=\coordindex, y expr= \thisrowno{0} / (\thisrowno{1} + \thisrowno{0}), col sep=space ] {./halfcircle.V0V1-2-0.plot};
\addplot table[ x expr=\coordindex, y expr= \thisrowno{0} / (\thisrowno{1} + \thisrowno{0}), col sep=space ] {./hole3.V0V1-2-0.plot};
\end{axis}
\end{tikzpicture}
\caption{Realistic grids}
\end{subfigure}
\caption{The proportion of vertices sorted into $\vertices_0$ over different thresholds.}
\label{fig:V0V1}
\end{figure}
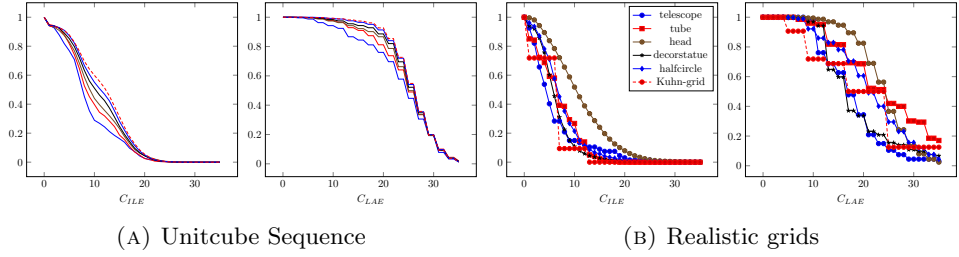

Figure \ref{fig:V0V1} shows the proportion of vertices that have been sorted into $\vertices_0$ for different grids. As expected LAE and ILE have different "active" regions, i.e. regions, where a change of threshold changes the distribution a lot. For ILE this is $C_{ILE} \in \{0,\ldots,20\}$ and for $LAE$ this is $C_{LAE} \in \{10,\ldots,30\}$. Keep these active regions in mind, when examining the following figures. The distribution of $\vertices$ into $\vertices_0$ and $\vertices_1$ does not depend on the choice of ordering. So the active regions do not change from SRN to SRN2.

We now investigate metric $d^1_\grid$, i.e. the amount of not strongly compatible faces. To be comparable \new{between grids of different size}, we display the percentage with respect to all inner faces. Boundary faces are excluded, as they do not need to be compatible.

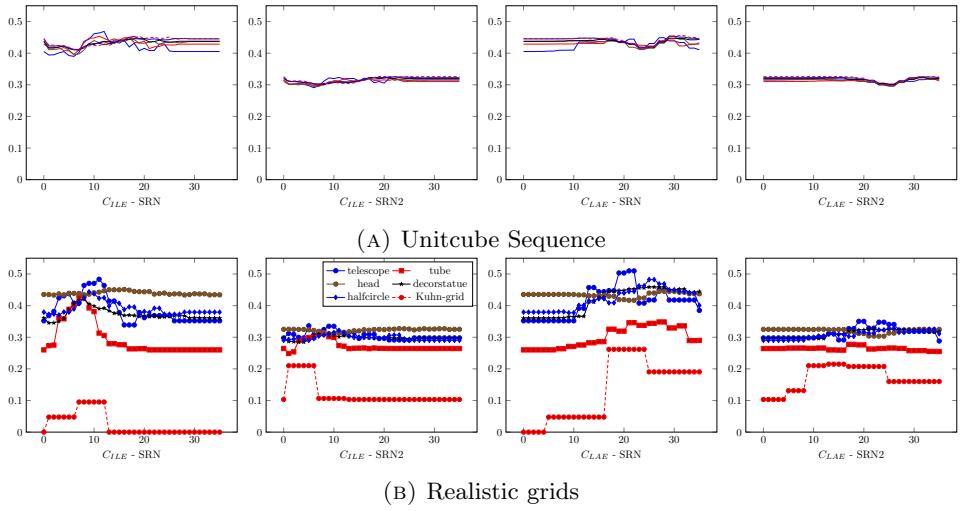
\begin{figure}[ht]
\begin{subfigure}{0.99\textwidth}
\begin{tikzpicture}[scale=0.405]
\begin{axis}[xlabel={$C_{ILE}$ - SRN}, ymin=0, ymax=0.55]
\foreach \i in {1,...,6}{
\addplot table[ x expr=\coordindex, y expr= \thisrowno{0} / \thisrowno{1}, col sep=space, mark={}] {./tet.\i.msh.NonCompat-1-0.plot};
}
\end{axis}
\end{tikzpicture}
\begin{tikzpicture}[scale=0.405]
\begin{axis}[xlabel={$C_{ILE}$ - SRN2}, ymin=0, ymax=0.55]
\foreach \i in {1,...,6}{
\addplot table[ x expr=\coordindex, y expr= \thisrowno{0} / (\thisrowno{1} + \thisrowno{0}), col sep=space, mark={}] {./tet.\i.msh.NonCompat-1-1.plot};
}
\end{axis}
\end{tikzpicture}
\begin{tikzpicture}[scale=0.405]
\begin{axis}[xlabel={$C_{LAE}$ - SRN}, ymin=0, ymax=0.55]
\foreach \i in {1,...,6}{
\addplot table[ x expr=\coordindex, y expr= \thisrowno{0} / \thisrowno{1}, col sep=space, mark={}] {./tet.\i.msh.NonCompat-2-0.plot};
}
\end{axis}
\end{tikzpicture}
\begin{tikzpicture}[scale=0.405]
\begin{axis}[xlabel={$C_{LAE}$ - SRN2}, ymin=0, ymax=0.55]
\foreach \i in {1,...,6}{
\addplot table[ x expr=\coordindex, y expr= \thisrowno{0} / (\thisrowno{1} + \thisrowno{0}), col sep=space, mark={}] {./tet.\i.msh.NonCompat-2-1.plot};
}
\end{axis}
\end{tikzpicture}
\caption{Unitcube Sequence}
\end{subfigure}
\begin{subfigure}{0.99\textwidth}
\begin{tikzpicture}[scale=0.405]
\begin{axis}[xlabel={$C_{ILE}$ - SRN}, ymin=0, ymax=0.55]
\addplot table[ x expr=\coordindex, y expr= \thisrowno{0} / \thisrowno{1}, col sep=space ] {./telescope2ndorder.NonCompat-1-0.plot};
\addplot table[ x expr=\coordindex, y expr= \thisrowno{0} / \thisrowno{1}, col sep=space ] {./tube3d.NonCompat-1-0.plot};
\addplot table[ x expr=\coordindex, y expr= \thisrowno{0} / \thisrowno{1}, col sep=space ] {./TA052.NonCompat-1-0.plot};
\addplot table[ x expr=\coordindex, y expr= \thisrowno{0} / \thisrowno{1}, col sep=space ] {./decorstatue.NonCompat-1-0.plot};
\addplot table[ x expr=\coordindex, y expr= \thisrowno{0} / \thisrowno{1}, col sep=space ] {./halfcircle.NonCompat-1-0.plot};
\addplot table[ x expr=\coordindex, y expr= \thisrowno{0} / \thisrowno{1}, col sep=space ] {./hole3.NonCompat-1-0.plot};
\end{axis}
\end{tikzpicture}
\begin{tikzpicture}[scale=0.405]
\begin{axis}[legend columns=2, xlabel={$C_{ILE}$ - SRN2}, ymin=0, ymax=0.55]
\addplot table[ x expr=\coordindex, y expr= \thisrowno{0} / (\thisrowno{1} + \thisrowno{0}), col sep=space ] {./telescope2ndorder.NonCompat-1-1.plot};
\addlegendentry{telescope};
\addplot table[ x expr=\coordindex, y expr= \thisrowno{0} / (\thisrowno{1} + \thisrowno{0}), col sep=space ] {./tube3d.NonCompat-1-1.plot};
\addlegendentry{tube};
\addplot table[ x expr=\coordindex, y expr= \thisrowno{0} / (\thisrowno{1} + \thisrowno{0}), col sep=space ] {./TA052.NonCompat-1-1.plot};
\addlegendentry{head};
\addplot table[ x expr=\coordindex, y expr= \thisrowno{0} / (\thisrowno{1} + \thisrowno{0}), col sep=space ] {./decorstatue.NonCompat-1-1.plot};
\addlegendentry{decorstatue};
\addplot table[ x expr=\coordindex, y expr= \thisrowno{0} / (\thisrowno{1} + \thisrowno{0}), col sep=space ] {./halfcircle.NonCompat-1-1.plot};
\addlegendentry{halfcircle};
\addplot table[ x expr=\coordindex, y expr= \thisrowno{0} / (\thisrowno{1} + \thisrowno{0}), col sep=space ] {./hole3.NonCompat-1-1.plot};
\addlegendentry{Kuhn-grid};
\end{axis}
\end{tikzpicture}
\begin{tikzpicture}[scale=0.405]
\begin{axis}[xlabel={$C_{LAE}$ - SRN}, ymin=0, ymax=0.55]
\addplot table[ x expr=\coordindex, y expr= \thisrowno{0} / \thisrowno{1}, col sep=space ] {./telescope2ndorder.NonCompat-2-0.plot};
\addplot table[ x expr=\coordindex, y expr= \thisrowno{0} / \thisrowno{1}, col sep=space ] {./tube3d.NonCompat-2-0.plot};
\addplot table[ x expr=\coordindex, y expr= \thisrowno{0} / \thisrowno{1}, col sep=space ] {./TA052.NonCompat-2-0.plot};
\addplot table[ x expr=\coordindex, y expr= \thisrowno{0} / \thisrowno{1}, col sep=space ] {./decorstatue.NonCompat-2-0.plot};
\addplot table[ x expr=\coordindex, y expr= \thisrowno{0} / \thisrowno{1}, col sep=space ] {./halfcircle.NonCompat-2-0.plot};
\addplot table[ x expr=\coordindex, y expr= \thisrowno{0} / \thisrowno{1}, col sep=space ] {./hole3.NonCompat-2-0.plot};
\end{axis}
\end{tikzpicture}
\begin{tikzpicture}[scale=0.405]
\begin{axis}[ xlabel={$C_{LAE}$ - SRN2}, ymin=0, ymax=0.55]
\addplot table[ x expr=\coordindex, y expr= \thisrowno{0} / (\thisrowno{1} + \thisrowno{0}), col sep=space ] {./telescope2ndorder.NonCompat-2-1.plot};
\addplot table[ x expr=\coordindex, y expr= \thisrowno{0} / (\thisrowno{1} + \thisrowno{0}), col sep=space ] {./tube3d.NonCompat-2-1.plot};
\addplot table[ x expr=\coordindex, y expr= \thisrowno{0} / (\thisrowno{1} + \thisrowno{0}), col sep=space ] {./TA052.NonCompat-2-1.plot};
\addplot table[ x expr=\coordindex, y expr= \thisrowno{0} / (\thisrowno{1} + \thisrowno{0}), col sep=space ] {./decorstatue.NonCompat-2-1.plot};
\addplot table[ x expr=\coordindex, y expr= \thisrowno{0} / (\thisrowno{1} + \thisrowno{0}), col sep=space ] {./halfcircle.NonCompat-2-1.plot};
\addplot table[ x expr=\coordindex, y expr= \thisrowno{0} / (\thisrowno{1} + \thisrowno{0}), col sep=space ] {./hole3.NonCompat-2-1.plot};
\end{axis}
\end{tikzpicture}
\caption{Realistic grids}
\end{subfigure}
\caption{Percentage of not strongly compatible faces over different threshold values.}
\label{fig:noncompat}
\end{figure}

Figure \ref{fig:noncompat} displays the amount of not strongly compatible faces in the grid. There are a few things to note. In most cases SRN2 outperforms SRN by about $10\%$ of the faces. The Kuhn-grid is able to recover the strong compatibility. Surprisingly this is reached in the inactive region (i.e. OT0) combined with SRN.

Using the abbreviations Volume(V), Longest Edge(LE), Shortest Edge(SE), Largest Face(LF), Smallest Face(SF), Face Volume(F), we examine the following geometric metrics of the grid: V/LE$^3$, V/SE$^3$, V/LF$^{3/2}$, V/SF$^{3/2}$, F/LE$^2$, F/SE$^2$ and the d-sine \cite{Eriksson1978}. We observed that (V/LE$^3$) $\approx$ (V/LF$^{2/3}$)$\times$ (F/LE$^2$) and additionally (V/LF) $\approx$ d-sine. Qualitatively the metrics behave similar, so we choose to display only the d-sine here. 

The first value in these graphs with Threshold value $C_{ILE/LAE}=-1$ denotes the unrefined grid, which is obviously the same for all Thresholds. The geometric quality indicators have been measured on the 3 times uniformly refined grid.

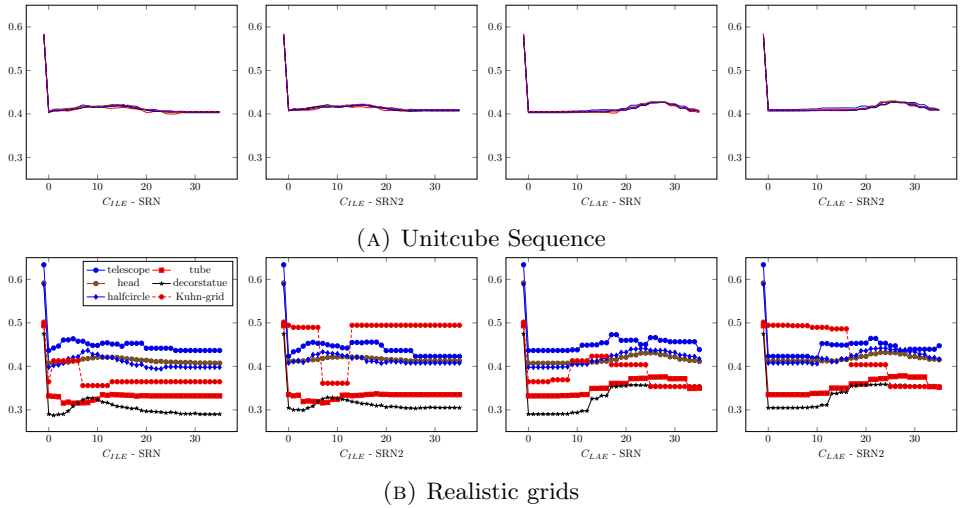
\begin{figure}[ht]
\begin{subfigure}{0.99\textwidth}
\begin{tikzpicture}[scale=0.405]
\begin{axis}[xlabel={$C_{ILE}$ - SRN},ymin=0.25,ymax=0.65]
\foreach \i in {1,...,6}{
\addplot table[ x expr=\coordindex-1, y index=21, col sep=space, mark={} ] {./tet.\i.msh.threshold-1-0.plot};
}
\end{axis}
\end{tikzpicture}
\begin{tikzpicture}[scale=0.405]
\begin{axis}[xlabel={$C_{ILE}$ - SRN2},ymin=0.25,ymax=0.65]
\foreach \i in {1,...,6}{
\addplot table[ x expr=\coordindex-1, y index=21, col sep=space, mark={}] {./tet.\i.msh.threshold-1-1.plot};
}
\end{axis}
\end{tikzpicture}
\begin{tikzpicture}[scale=0.405]
\begin{axis}[xlabel={$C_{LAE}$ - SRN},ymin=0.25,ymax=0.65]
\foreach \i in {1,...,6}{
\addplot table[ x expr=\coordindex-1, y index=21, col sep=space, mark={}] {./tet.\i.msh.threshold-2-0.plot};
}
\end{axis}
\end{tikzpicture}
\begin{tikzpicture}[scale=0.405]
\begin{axis}[xlabel={$C_{LAE}$ - SRN2},ymin=0.25,ymax=0.65]
\foreach \i in {1,...,6}{
\addplot table[ x expr=\coordindex-1, y index=21, col sep=space, mark={}] {./tet.\i.msh.threshold-2-1.plot};
}
\end{axis}
\end{tikzpicture}
\caption{Unitcube Sequence}
\end{subfigure}
\begin{subfigure}{0.99\textwidth}
\begin{tikzpicture}[scale=0.405]
\begin{axis}[legend columns=2, xlabel={$C_{ILE}$ - SRN},ymin=0.25,ymax=0.65]
\addplot table[ x expr=\coordindex-1, y index= 21, col sep=space ] {./telescope2ndorder.threshold-1-0.plot};
\addlegendentry{telescope};
\addplot table[ x expr=\coordindex-1, y index= 21, col sep=space ] {./tube3d.threshold-1-0.plot};
\addlegendentry{tube};
\addplot table[ x expr=\coordindex-1, y index= 21, col sep=space ]  {./TA052.threshold-1-0.plot};
\addlegendentry{head};
\addplot table[ x expr=\coordindex-1, y index= 21, col sep=space ]  {./decorstatue.threshold-1-0.plot};
\addlegendentry{decorstatue};
\addplot table[ x expr=\coordindex-1, y index= 21, col sep=space ]  {./halfcircle.threshold-1-0.plot};
\addlegendentry{halfcircle};
\addplot table[ x expr=\coordindex-1, y index= 21, col sep=space ]  {./hole3.threshold-1-0.plot};
\addlegendentry{Kuhn-grid};
\end{axis}
\end{tikzpicture}
\begin{tikzpicture}[scale=0.405]
\begin{axis}[xlabel={$C_{ILE}$ - SRN2},ymin=0.25,ymax=0.65]
\addplot table[ x expr=\coordindex-1, y index= 21, col sep=space ] {./telescope2ndorder.threshold-1-1.plot};
\addplot table[ x expr=\coordindex-1, y index= 21, col sep=space ] {./tube3d.threshold-1-1.plot};
\addplot table[ x expr=\coordindex-1, y index= 21, col sep=space ]  {./TA052.threshold-1-1.plot};
\addplot table[ x expr=\coordindex-1, y index= 21, col sep=space ]  {./decorstatue.threshold-1-1.plot};
\addplot table[ x expr=\coordindex-1, y index= 21, col sep=space ]  {./halfcircle.threshold-1-1.plot};
\addplot table[ x expr=\coordindex-1, y index= 21, col sep=space ]  {./hole3.threshold-1-1.plot};
\end{axis}
\end{tikzpicture}
\begin{tikzpicture}[scale=0.405]
\begin{axis}[xlabel={$C_{LAE}$ - SRN},ymin=0.25,ymax=0.65]
\addplot table[ x expr=\coordindex-1, y index= 21, col sep=space ] {./telescope2ndorder.threshold-2-0.plot};
\addplot table[ x expr=\coordindex-1, y index= 21, col sep=space ] {./tube3d.threshold-2-0.plot};
\addplot table[ x expr=\coordindex-1, y index= 21, col sep=space ]  {./TA052.threshold-2-0.plot};
\addplot table[ x expr=\coordindex-1, y index= 21, col sep=space ]  {./decorstatue.threshold-2-0.plot};
\addplot table[ x expr=\coordindex-1, y index= 21, col sep=space ]  {./halfcircle.threshold-2-0.plot};
\addplot table[ x expr=\coordindex-1, y index= 21, col sep=space ]  {./hole3.threshold-2-0.plot};
\end{axis}
\end{tikzpicture}
\begin{tikzpicture}[scale=0.405]
\begin{axis}[xlabel={$C_{LAE}$ - SRN2},ymin=0.25,ymax=0.65]
\addplot table[ x expr=\coordindex-1, y index= 21, col sep=space ] {./telescope2ndorder.threshold-2-1.plot};
\addplot table[ x expr=\coordindex-1, y index= 21, col sep=space ] {./tube3d.threshold-2-1.plot};
\addplot table[ x expr=\coordindex-1, y index= 21, col sep=space ]  {./TA052.threshold-2-1.plot};
\addplot table[ x expr=\coordindex-1, y index= 21, col sep=space ]  {./decorstatue.threshold-2-1.plot};
\addplot table[ x expr=\coordindex-1, y index= 21, col sep=space ]  {./halfcircle.threshold-2-1.plot};
\addplot table[ x expr=\coordindex-1, y index= 21, col sep=space ]  {./hole3.threshold-2-1.plot};
\end{axis}
\end{tikzpicture}
\caption{Realistic grids}
\end{subfigure}
\caption{Average d-sine}
\label{fig:dsine}
\end{figure}

Figure \ref{fig:dsine} displays the average d-sine for the investigated grids. For almost all grids we see the expected significant drop of geometric quality from the initial grid to the 3 times uniformly refined mesh and then some improvement in the active region. The only exception is the Kuhn-grid, which actually gets worse in the active region, where it is not strongly compatible anymore. Also there is no initial drop, as the mesh is based on Kuhn-simplices. For other grids LAE seems to perform a bit better than ILE, and SRN versus SRN2 does not seem to have much impact.

\begin{figure}[ht]
\begin{subfigure}{0.99\textwidth}
\begin{tikzpicture}[scale=0.4]
\begin{axis}[xlabel={$C_{ILE}$ - SRN}, ymin=35, ymax=170]
\foreach \i in {1,...,6}{
\addplot table[ x expr=\coordindex-1, y index=26, col sep=space, mark={}] {./tet.\i.msh.threshold-1-0.plot};
}
\end{axis}
\end{tikzpicture}
\begin{tikzpicture}[scale=0.4]
\begin{axis}[xlabel={$C_{ILE}$ - SRN2}, ymin=35, ymax=170]
\foreach \i in {1,...,6}{
\addplot table[ x expr=\coordindex-1, y index=26, col sep=space, mark={}] {./tet.\i.msh.threshold-1-1.plot};
}
\end{axis}
\end{tikzpicture}
\begin{tikzpicture}[scale=0.4]
\begin{axis}[xlabel={$C_{LAE}$ - SRN}, ymin=35, ymax=170]
\foreach \i in {1,...,6}{
\addplot table[ x expr=\coordindex-1, y index=26, col sep=space, mark={} ] {./tet.\i.msh.threshold-2-0.plot};
}
\end{axis}
\end{tikzpicture}
\begin{tikzpicture}[scale=0.4]
\begin{axis}[xlabel={$C_{LAE}$ - SRN2}, ymin=35, ymax=170]
\foreach \i in {1,...,6}{
\addplot table[ x expr=\coordindex-1, y index=26, col sep=space, mark={} ] {./tet.\i.msh.threshold-2-1.plot};
}
\end{axis}
\end{tikzpicture}
\caption{Unitcube Sequence}
\end{subfigure}
\begin{subfigure}{0.99\textwidth}
\begin{tikzpicture}[scale=0.4]
\begin{axis}[xlabel={$C_{ILE}$ - SRN}, ymax=250]
\addplot table[ x expr=\coordindex-1, y index= 26, col sep=space ] {./telescope2ndorder.threshold-1-0.plot};
\addplot table[ x expr=\coordindex-1, y index= 26, col sep=space ] {./tube3d.threshold-1-0.plot};
\addplot table[ x expr=\coordindex-1, y index= 26, col sep=space ]  {./TA052.threshold-1-0.plot};
\addplot table[ x expr=\coordindex-1, y index= 26, col sep=space ]  {./halfcircle.threshold-1-0.plot};
\addplot table[ x expr=\coordindex-1, y index= 26, col sep=space ]  {./hole3.threshold-1-0.plot};
\end{axis}
\end{tikzpicture}
\begin{tikzpicture}[scale=0.4]
\begin{axis}[xlabel={$C_{ILE}$ - SRN2}, ymax=250]
\addplot table[ x expr=\coordindex-1, y index= 26, col sep=space ] {./telescope2ndorder.threshold-1-1.plot};
\addplot table[ x expr=\coordindex-1, y index= 26, col sep=space ] {./tube3d.threshold-1-1.plot};
\addplot table[ x expr=\coordindex-1, y index= 26, col sep=space ]  {./TA052.threshold-1-1.plot};
\addplot table[ x expr=\coordindex-1, y index= 26, col sep=space ]  {./halfcircle.threshold-1-1.plot};
\addplot table[ x expr=\coordindex-1, y index= 26, col sep=space ]  {./hole3.threshold-1-1.plot};
\end{axis}
\end{tikzpicture}
\begin{tikzpicture}[scale=0.4]
\begin{axis}[xlabel={$C_{LAE}$ - SRN}, ymax=250]
\addplot table[ x expr=\coordindex-1, y index= 26, col sep=space ] {./telescope2ndorder.threshold-2-0.plot};
\addplot table[ x expr=\coordindex-1, y index= 26, col sep=space ] {./tube3d.threshold-2-0.plot};
\addplot table[ x expr=\coordindex-1, y index= 26, col sep=space ]  {./TA052.threshold-2-0.plot};
\addplot table[ x expr=\coordindex-1, y index= 26, col sep=space ]  {./halfcircle.threshold-2-0.plot};
\addplot table[ x expr=\coordindex-1, y index= 26, col sep=space ]  {./hole3.threshold-2-0.plot};
\end{axis}
\end{tikzpicture}
\begin{tikzpicture}[scale=0.4]
\begin{axis}[xlabel={$C_{LAE}$ - SRN2}, ymax=250]
\addplot table[ x expr=\coordindex-1, y index= 26, col sep=space ] {./telescope2ndorder.threshold-2-1.plot};
\addlegendentry{telescope};
\addplot table[ x expr=\coordindex-1, y index= 26, col sep=space ] {./tube3d.threshold-2-1.plot};
\addlegendentry{tube};
\addplot table[ x expr=\coordindex-1, y index= 26, col sep=space ]  {./TA052.threshold-2-1.plot};
\addlegendentry{head};
\addplot table[ x expr=\coordindex-1, y index= 26, col sep=space ]  {./halfcircle.threshold-2-1.plot};
\addlegendentry{halfcircle};
\addplot table[ x expr=\coordindex-1, y index= 26, col sep=space ]  {./hole3.threshold-2-1.plot};
\addlegendentry{Kuhn-grid};
\end{axis}
\end{tikzpicture}
\caption{Realistic grids without decorstatue}
\end{subfigure}
\begin{subfigure}{0.99\textwidth}
\begin{tikzpicture}[scale=0.4]
\begin{axis}[xlabel={$C_{ILE}$ - SRN}, ymax=900]
\addplot table[ x expr=\coordindex-1, y index= 26, col sep=space ]  {./decorstatue.threshold-1-0.plot};
\addlegendentry{decorstatue};
\end{axis}
\end{tikzpicture}
\begin{tikzpicture}[scale=0.4]
\begin{axis}[xlabel={$C_{ILE}$ - SRN2}, ymax=900]
\addplot table[ x expr=\coordindex-1, y index= 26, col sep=space ]  {./decorstatue.threshold-1-1.plot};
\end{axis}
\end{tikzpicture}
\begin{tikzpicture}[scale=0.4]
\begin{axis}[xlabel={$C_{LAE}$ - SRN}, ymax=900]
\addplot table[ x expr=\coordindex-1, y index= 26, col sep=space ]  {./decorstatue.threshold-2-0.plot};
\end{axis}
\end{tikzpicture}
\begin{tikzpicture}[scale=0.4]
\begin{axis}[xlabel={$C_{LAE}$ - SRN2}, ymax=900]
\addplot table[ x expr=\coordindex-1, y index= 26, col sep=space ]  {./decorstatue.threshold-2-1.plot};
\end{axis}
\end{tikzpicture}
\caption{Decorstatue}
\end{subfigure}
\caption{Maximum Number of Elements at a vertex}
\label{fig:maxvtx}
\end{figure}
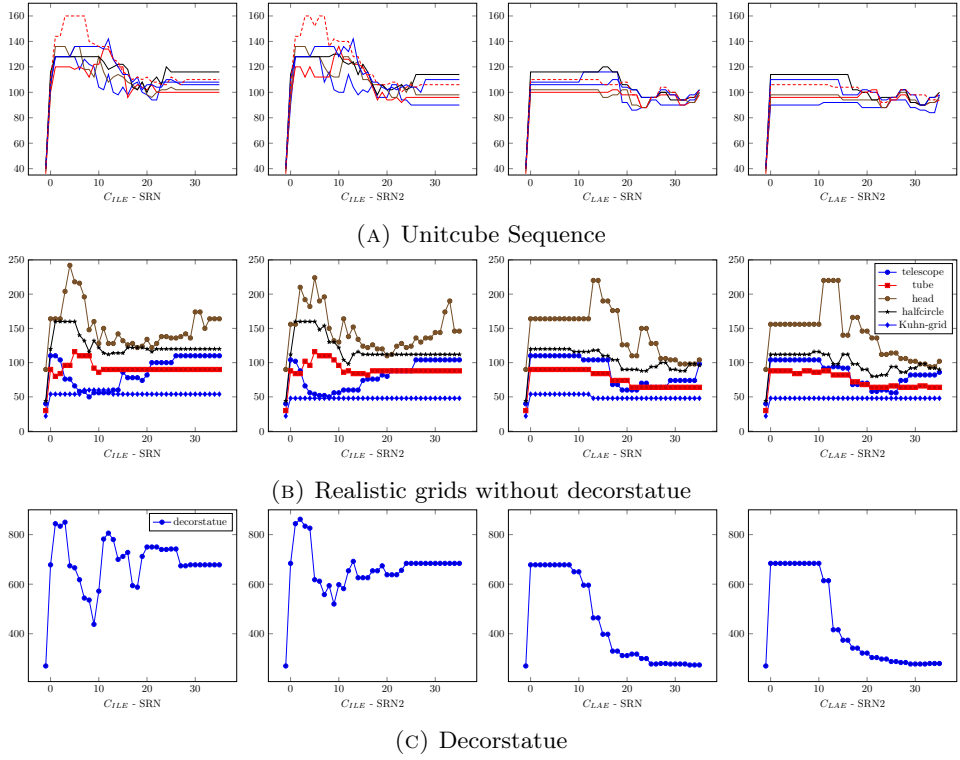

A way to investigate the minimum angle is to consider the maximum number of adjacent elements at a vertex. Figure \ref{fig:maxvtx} indicates that there is a big difference between LAE and ILE. In the active region of ILE this gets worse, while in the active region of LAE this value improves. To get an impression of the size of this value note the following calculation. In the equilateral case at every vertex there are 20 adjacent elements, uniform bisection in 3d can quadruple the number of elements at a vertex, which  yields an expected 80 elements as a maximum after 3 uniform refinements for equilateral initial grids. The best values of LAE are not far from that. 
The special grid originating from Kuhn-cubes (see Figure \ref{fig:kuhngrid}) performs best under bisection with respect to both measures. The second special case among the realistic grids is the decorstatue grid \ref{fig:decorstatue}. It has by far the most elements at a vertex in the initial triangulation and we see that ILE is not improving that, while LAE almost recovers the initial value at a high threshold.

\subsection{Conforming Closure}

We discuss the connection between the distances $d^1_\grid$, $d^2_\grid$ and $d^2_{\grid_3}$. Unfortunately the computation cost of $d^2_{\grid}$ is at least $O(n^2)$ and  $d^2_{\grid_3}$ is at least $O((8n)^2)$. Hence we did not compute these measures for all grids and possible parameters but just for a small selection. Also in the implementation we make the simplification, if refinement of an element $\elm$ is part of the conforming closure of another element $\elm'$ then the conforming closure of $\elm $ is included in the conforming closure of $\elm'$ and we do not compute the closure of $\elm$. This means the maximum conforming closure is computed exactly, while we just provide an upper bound for the average conforming closure.

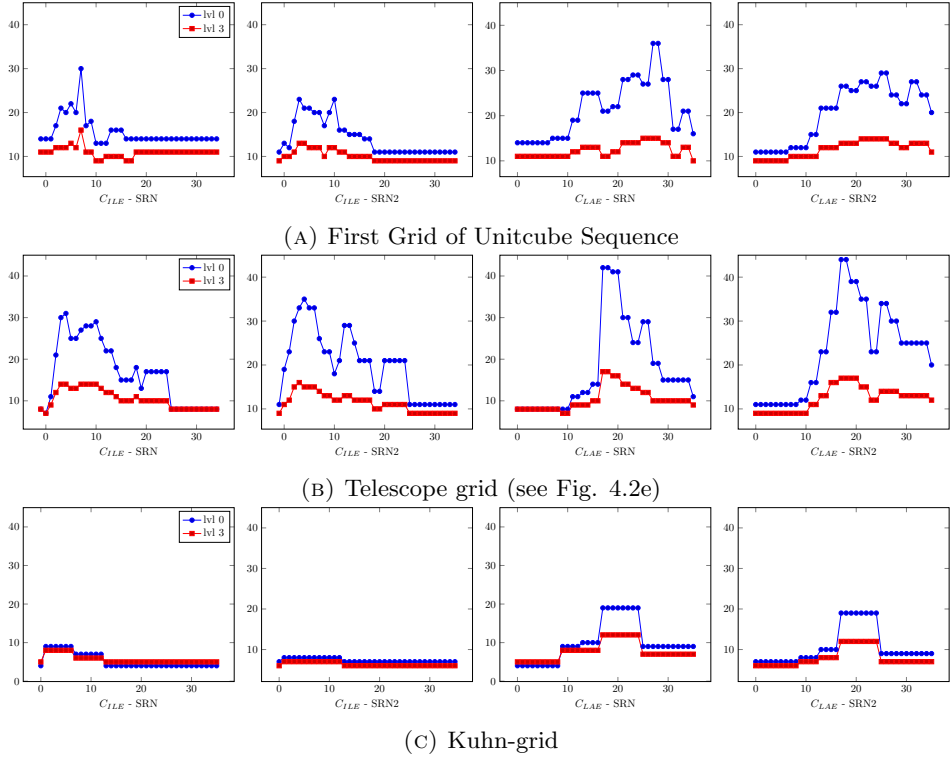
\begin{figure}[ht]
\begin{subfigure}{0.99\textwidth}
\begin{tikzpicture}[scale=0.405]
\begin{axis}[xlabel={$C_{ILE}$ - SRN}, ymax=45]
\addplot table[ x expr=\coordindex-1, y index=2, col sep=space ] {./tet.0.msh.Closure0-1-0.plot};
\addlegendentry{lvl 0};
\addplot table[ x expr=\coordindex-1, y index=2, col sep=space ] {./tet.0.msh.Closure3-1-0.plot};
\addlegendentry{lvl 3};
\end{axis}
\end{tikzpicture}
\begin{tikzpicture}[scale=0.405]
\begin{axis}[xlabel={$C_{ILE}$ - SRN2}, ymax=45]
\addplot table[ x expr=\coordindex-1, y index=2, col sep=space ] {./tet.0.msh.Closure0-1-1.plot};
\addplot table[ x expr=\coordindex-1, y index=2, col sep=space ] {./tet.0.msh.Closure3-1-1.plot};
\end{axis}
\end{tikzpicture}
\begin{tikzpicture}[scale=0.405]
\begin{axis}[xlabel={$C_{LAE}$ - SRN}, ymax=45]
\addplot table[ x expr=\coordindex, y index=2, col sep=space ] {./tet.0.msh.Closure0-2-0.plot};
\addplot table[ x expr=\coordindex, y index=2, col sep=space ] {./tet.0.msh.Closure3-2-0.plot};
\end{axis}
\end{tikzpicture}
\begin{tikzpicture}[scale=0.405]
\begin{axis}[xlabel={$C_{LAE}$ - SRN2}, ymax=45]
\addplot table[ x expr=\coordindex, y index=2, col sep=space ] {./tet.0.msh.Closure0-2-1.plot};
\addplot table[ x expr=\coordindex, y index=2, col sep=space ] {./tet.0.msh.Closure3-2-1.plot};
\end{axis}
\end{tikzpicture}
\caption{First Grid of Unitcube Sequence}
\end{subfigure}
\begin{subfigure}{0.99\textwidth}
\begin{tikzpicture}[scale=0.405]
\begin{axis}[xlabel={$C_{ILE}$ - SRN}, ymax=45]
\addplot table[ x expr=\coordindex-1, y index=2, col sep=space ] {./telescope2ndorder.Closure0-1-0.plot};
\addlegendentry{lvl 0};
\addplot table[ x expr=\coordindex-1, y index=2, col sep=space ] {./telescope2ndorder.Closure3-1-0.plot};
\addlegendentry{lvl 3};
\end{axis}
\end{tikzpicture}
\begin{tikzpicture}[scale=0.405]
\begin{axis}[xlabel={$C_{ILE}$ - SRN2}, ymax=45]
\addplot table[ x expr=\coordindex-1, y index=2, col sep=space ] {./telescope2ndorder.Closure0-1-1.plot};
\addplot table[ x expr=\coordindex-1, y index=2, col sep=space ] {./telescope2ndorder.Closure3-1-1.plot};
\end{axis}
\end{tikzpicture}
\begin{tikzpicture}[scale=0.405]
\begin{axis}[xlabel={$C_{LAE}$ - SRN}, ymax=45]
\addplot table[ x expr=\coordindex, y index=2, col sep=space ] {./telescope2ndorder.Closure0-2-0.plot};
\addplot table[ x expr=\coordindex, y index=2, col sep=space ] {./telescope2ndorder.Closure3-2-0.plot};
\end{axis}
\end{tikzpicture}
\begin{tikzpicture}[scale=0.405]
\begin{axis}[xlabel={$C_{LAE}$ - SRN2}, ymax=45]
\addplot table[ x expr=\coordindex, y index=2, col sep=space ] {./telescope2ndorder.Closure0-2-1.plot};
\addplot table[ x expr=\coordindex, y index=2, col sep=space ] {./telescope2ndorder.Closure3-2-1.plot};
\end{axis}
\end{tikzpicture}
  \caption{Telescope grid (see Fig. \ref{fig:telescope})}
\end{subfigure}
\begin{subfigure}{0.99\textwidth}
\begin{tikzpicture}[scale=0.405]
\begin{axis}[xlabel={$C_{ILE}$ - SRN}, ymax=45]
\addplot table[ x expr=\coordindex, y index= 2, col sep=space ]  {./hole3.Closure0-1-0.plot};
\addlegendentry{lvl 0};
\addplot table[ x expr=\coordindex, y index= 2, col sep=space ]  {./hole3.Closure3-1-0.plot};
\addlegendentry{lvl 3};
\end{axis}
\end{tikzpicture}
\begin{tikzpicture}[scale=0.405]
\begin{axis}[xlabel={$C_{ILE}$ - SRN2}, ymax=45]
\addplot table[ x expr=\coordindex, y index= 2, col sep=space ]  {./hole3.Closure0-1-1.plot};
\addplot table[ x expr=\coordindex, y index= 2, col sep=space ]  {./hole3.Closure3-1-1.plot};
\end{axis}
\end{tikzpicture}
\begin{tikzpicture}[scale=0.405]
\begin{axis}[xlabel={$C_{LAE}$ - SRN}, ymax=45]
\addplot table[ x expr=\coordindex, y index= 2, col sep=space ]  {./hole3.Closure0-2-0.plot};
\addplot table[ x expr=\coordindex, y index= 2, col sep=space ]  {./hole3.Closure3-2-0.plot};
\end{axis}
\end{tikzpicture}
\begin{tikzpicture}[scale=0.405]
\begin{axis}[xlabel={$C_{LAE}$ - SRN2}, ymax=45]
\addplot table[ x expr=\coordindex, y index= 2, col sep=space ]  {./hole3.Closure0-2-1.plot};
\addplot table[ x expr=\coordindex, y index= 2, col sep=space ]  {./hole3.Closure3-2-1.plot};
\end{axis}
\end{tikzpicture}
\caption{Kuhn-grid}
\end{subfigure}
\caption{Average conforming closure}
\label{fig:avgclos}
\end{figure}

\begin{figure}[ht]
\begin{subfigure}{0.99\textwidth}
\begin{tikzpicture}[scale=0.4]
\begin{axis}[xlabel={$C_{ILE}$ - SRN}, ymax=180]
\addplot table[ x expr=\coordindex, y index=1, col sep=space ] {./tet.0.msh.Closure0-1-0.plot};
\addlegendentry{lvl 0};
\addplot table[ x expr=\coordindex, y index=1, col sep=space ] {./tet.0.msh.Closure3-1-0.plot};
\addlegendentry{lvl 3};
\end{axis}
\end{tikzpicture}
\begin{tikzpicture}[scale=0.4]
\begin{axis}[xlabel={$C_{ILE}$ - SRN2}, ymax=180]
\addplot table[ x expr=\coordindex, y index=1, col sep=space ] {./tet.0.msh.Closure0-1-1.plot};
\addplot table[ x expr=\coordindex, y index=1, col sep=space ] {./tet.0.msh.Closure3-1-1.plot};
\end{axis}
\end{tikzpicture}
\begin{tikzpicture}[scale=0.4]
\begin{axis}[xlabel={$C_{LAE}$ - SRN}, ymax=180]
\addplot table[ x expr=\coordindex, y index=1, col sep=space ] {./tet.0.msh.Closure0-2-0.plot};
\addplot table[ x expr=\coordindex, y index=1, col sep=space ] {./tet.0.msh.Closure3-2-0.plot};
\end{axis}
\end{tikzpicture}
\begin{tikzpicture}[scale=0.4]
\begin{axis}[xlabel={$C_{LAE}$ - SRN2}, ymax=180]
\addplot table[ x expr=\coordindex, y index=1, col sep=space ] {./tet.0.msh.Closure0-2-1.plot};
\addplot table[ x expr=\coordindex, y index=1, col sep=space ] {./tet.0.msh.Closure3-2-1.plot};
\end{axis}
\end{tikzpicture}
\caption{First grid of Unitcube Sequence}
\end{subfigure}
\begin{subfigure}{0.99\textwidth}
\begin{tikzpicture}[scale=0.4]
\begin{axis}[xlabel={$C_{ILE}$ - SRN}, ymax=180]
\addplot table[ x expr=\coordindex, y index=1, col sep=space ] {./telescope2ndorder.Closure0-1-0.plot};
\addlegendentry{lvl 0};
\addplot table[ x expr=\coordindex, y index=1, col sep=space ] {./telescope2ndorder.Closure3-1-0.plot};
\addlegendentry{lvl 3};
\end{axis}
\end{tikzpicture}
\begin{tikzpicture}[scale=0.4]
\begin{axis}[xlabel={$C_{ILE}$ - SRN2}, ymax=180]
\addplot table[ x expr=\coordindex, y index=1, col sep=space ] {./telescope2ndorder.Closure0-1-1.plot};
\addplot table[ x expr=\coordindex, y index=1, col sep=space ] {./telescope2ndorder.Closure3-1-1.plot};
\end{axis}
\end{tikzpicture}
\begin{tikzpicture}[scale=0.4]
\begin{axis}[xlabel={$C_{LAE}$ - SRN}, ymax=180]
\addplot table[ x expr=\coordindex, y index=1, col sep=space ] {./telescope2ndorder.Closure0-2-0.plot};
\addplot table[ x expr=\coordindex, y index=1, col sep=space ] {./telescope2ndorder.Closure3-2-0.plot};
\end{axis}
\end{tikzpicture}
\begin{tikzpicture}[scale=0.4]
\begin{axis}[xlabel={$C_{LAE}$ - SRN2}, ymax=180]
\addplot table[ x expr=\coordindex, y index=1, col sep=space ] {./telescope2ndorder.Closure0-2-1.plot};
\addplot table[ x expr=\coordindex, y index=1, col sep=space ] {./telescope2ndorder.Closure3-2-1.plot};
\end{axis}
\end{tikzpicture}
\caption{Telescope Grid}
\end{subfigure}
\begin{subfigure}{0.99\textwidth}
\begin{tikzpicture}[scale=0.4]
\begin{axis}[xlabel={$C_{ILE}$ - SRN}, ymax=180]
\addplot table[ x expr=\coordindex, y index= 1, col sep=space ]  {./hole3.Closure0-1-0.plot};
\addlegendentry{lvl 0};
\addplot table[ x expr=\coordindex, y index= 1, col sep=space ]  {./hole3.Closure3-1-0.plot};
\addlegendentry{lvl 3};
\end{axis}
\end{tikzpicture}
\begin{tikzpicture}[scale=0.4]
\begin{axis}[xlabel={$C_{ILE}$ - SRN2}, ymax=180]
\addplot table[ x expr=\coordindex, y index= 1, col sep=space ]  {./hole3.Closure0-1-1.plot};
\addplot table[ x expr=\coordindex, y index= 1, col sep=space ]  {./hole3.Closure3-1-1.plot};
\end{axis}
\end{tikzpicture}
\begin{tikzpicture}[scale=0.4]
\begin{axis}[xlabel={$C_{LAE}$ - SRN}, ymax=180]
\addplot table[ x expr=\coordindex, y index= 1, col sep=space ]  {./hole3.Closure0-2-0.plot};
\addplot table[ x expr=\coordindex, y index= 1, col sep=space ]  {./hole3.Closure3-2-0.plot};
\end{axis}
\end{tikzpicture}
\begin{tikzpicture}[scale=0.4]
\begin{axis}[xlabel={$C_{LAE}$ - SRN2}, ymax=180]
\addplot table[ x expr=\coordindex, y index= 1, col sep=space ]  {./hole3.Closure0-2-1.plot};
\addplot table[ x expr=\coordindex, y index= 1, col sep=space ]  {./hole3.Closure3-2-1.plot};
\end{axis}
\end{tikzpicture}
\caption{Kuhn-grid}
\end{subfigure}
\caption{$d^2_\grid$ and $d^2_{\grid_3}$ }
\label{fig:maxclos}
\end{figure}

Figure \ref{fig:avgclos} displays an upper bound for the average conforming closure of each element in a small grid of the tetgen sequence and the special grid, which becomes strongly compatible. Figure \ref{fig:maxclos} actually displays $d^1_\grid$ and $d^1_{\grid_3}$, i.e. the maximum conforming closure for both grids. Both times we measure both on the initial grid (blue) and on the 3 times uniformly refined grid (red). The behavior is almost always better in the 3 times uniformly refined case. The conforming closure is more costly in the active region, as we additionally introduce refinement propagation by having neighbors of different type. In the case of SRN we see for the tetgen mesh that the maximum conforming closure actually diminishes in the active region, but SRN2 performs better in total. The Kuhn-grid of course performs best in the inactive region, where it actually becomes strongly compatible.

\subsection{Algorithm complexity}

We also studied the runtime of the algorithm over the sequence of unit cube meshes that we 
used to study the threshold values. Figure \ref{fig:runtime} shows that the runtime 
actually is $O(n \log  n)$ with $n$ being the number of elements. 
The measured runtime includes recalculating the grid neighborhood information, 
checking whether a grid is compatible initially, setting up $\vertices_0,\vertices_1$, sorting the mesh, 
and finally checking how compatible the mesh is. 
\new{The algorithm can be implemented in $O(n)$, if the recalculation of the neighborhood information is not necessary. 
In the current implementation the association of neighboring elements is done via comparison of vertex indices of faces and 
storage of those in a \texttt{std::map} with $O(\log n)$ member access, where $n$ 
is the number of faces. Therefore, the overall algorithm complexity becomes $O(n \log n)$.
In our tests we have applied the various algorithms to meshes with about one
million grid cells in under $10$ seconds which we consider sufficiently fast, especially since the algorithm has to be applied only once.}

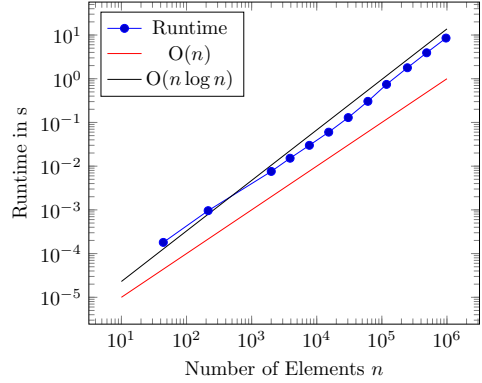
\begin{figure}[!ht]
  \begin{center}
\begin{tikzpicture}[scale=0.75]
\begin{loglogaxis}[legend style={at={(0.03,0.97)},anchor=north west}, xlabel = Number of Elements $n$, ylabel = Runtime in s]
\addplot table[ x index=0, y index= 1, col sep=space ]  {./bisect.gnu};
\addlegendentry{Runtime};
\addplot[mark={},red] coordinates { (10^1,10^-5) (10^6,10^0) } ;
\addlegendentry{O($n$)};
\addplot[mark={},black] coordinates { (10^1,2.3*10^-5) (10^6,13.8*10^0) } ;
\addlegendentry{O($n \log n$)};
\end{loglogaxis}
\end{tikzpicture}
\caption{Runtime of the algorithm for different grid sizes.}
\label{fig:runtime}
  \end{center}
\end{figure}

\section{Summary and Outlook} \label{sec:summary}

We have presented a weak compatibility condition that is sufficient for the iterative \NVB algorithm to terminate. Additionally we provide a simple algorithm to relabel any $d$-dimensional grid to fulfil said condition with effort $O(n)$. The methods we designed try to achieve a compromise between LEB and \NVB. We suggest to use (LAE)/(SRN2) with a parameter around 27 for grids created by mesh generators that aim at equilateral tetrahedrons. For meshes created by Kuhn-cubes it is best to just use the strongly compatible enumeration and not to try to improve the behaviour.

It is still an open question whether the \emph{weak compatibility condition} is enough to obtain 
an estimate like the one in~\cite{Stevenson:08} for the computational effort of the conforming closure
 or if this can be achieved with some extra mild conditions in the mesh.

\section*{Acknowledgements}

The author Martin Alk\"amper would like to thank the German Research Foundation (DFG) for financial support of the project within the Cluster of Excellence in Simulation Technology (EXC 310/1) at the University of Stuttgart.

Fernando Gaspoz acknowledges the German Research Foundation (DFG) for financial support of the project DFG 814/7-1 at the University of Stuttgart.

Robert Kl\"ofkorn acknowledges the Research Council of Norway and 
the industry partners;  ConocoPhillips Skandinavia AS, 
Aker BP ASA, Eni Norge AS, Maersk Oil Norway AS, 
DONG Energy A/S, Denmark, Statoil Petroleum AS, ENGIE E\&P NORGE AS, 
Lundin Norway AS, Halliburton AS, Schlumberger Norge AS, 
Wintershall Norge AS, DEA Norge AS of 
The National IOR Centre of Norway for support.

\end{document}